\newcommand{\Liehomotopy}{H}
\newtheorem{theorem}{Theorem}[section]
\newtheorem{definition}[theorem]{Definition}
\newtheorem*{definition*}{Definition}
\newtheorem{proposition}[theorem]{Proposition}
\newtheorem{remark}[theorem]{Remark}
\newtheorem*{remark*}{Remark}
\newtheorem*{remarks*}{Remarks}
\newtheorem*{notation*}{Notation}
\newtheorem*{ex*}{Example}
\newtheorem*{exs*}{Examples}
\newtheorem*{app*}{Application}
\newtheorem{conjecture*}{Conjecture}
\def\ts{\thinspace}
\title{
The Lie derivative and Noether's theorem on the aromatic bicomplex for the study of volume-preserving numerical integrators
}
\author{
Adrien Laurent\textsuperscript{1}
}
\begin{document}
\footnotetext[1]{
Univ Rennes, INRIA (Research team MINGuS), IRMAR (CNRS UMR 6625) and ENS Rennes,
France.
Adrien.Laurent@INRIA.fr.}

\maketitle

\begin{abstract}
The aromatic bicomplex is an algebraic tool based on aromatic Butcher trees and used in particular for the explicit description of volume-preserving affine-equivariant numerical integrators.
The present work defines new tools inspired from variational calculus such as the Lie derivative, different concepts of symmetries, and Noether's theory in the context of aromatic forests.
The approach allows to draw a correspondence between aromatic volume-preserving methods and symmetries on the Euler-Lagrange complex, to write Noether's theorem in the aromatic context, and to describe the aromatic B-series of volume-preserving methods explicitly with the Lie derivative.
%To further understand the links between geometric numerical integration and variational calculus, as well as to describe more precisely the form of volume-preserving methods, we define and study the Lie derivative and the concept of symmetries on the aromatic bicomplex. We give an aromatic formulation of Noether's theorem and adapt the approach in the context of numerical volume-preservation.
%We show in this note that there is a correspondence between the aromatic B-series of volume-preserving methods and the symmetries on the Euler-Lagrange complex.
%The analysis relies on a new extension of the Lie derivative on the aromatic bicomplex and provides us with new operations and the equivalent of the Noether theorems in the context of aromatic forms.
%We also provide further details on the links between the aromatic and the variationnal bicomplexes.

%This work allows us to describe the effect of symmetries and to introduce new operations on aromatic forms. In particular, we propose new tools for describing further the substitution of aromatic B-series and the aromatic B-series of a volume-preserving method.

\smallskip

\noindent
{\it Keywords:\,} aromatic bicomplex, Euler-Lagrange complex, Noether's theorem, aromatic Lie derivative, solenoidal forms, volume-preservation, geometric numerical integration.
\smallskip

\noindent
{\it AMS subject classification (2020):\,} Primary: 58E30, 58J10, 05C05; Secondary: 41A58, 37M15, 58A12.
\end{abstract}

%\tableofcontents

\section{Introduction}
%\label{section:Introduction}

The search for an affine-equivariant volume-preserving method is a central open problem of geometric numerical integration.
Such an integrator takes the form of an aromatic Butcher-series method~\cite{MuntheKaas16abs}.
While Butcher-series describe the Taylor expansion of the flow of ordinary differential equations and of a large class of their numerical approximations~\cite{Butcher72aat,Hairer74otb} (see also the textbooks~\cite{Hairer06gni,Butcher16nmf,Butcher21bsa} and the review~\cite{McLachlan17bsa}), aromatic B-series were introduced in~\cite{Iserles07bsm,Chartier07pfi} specifically for the study of volume preservation (see also~\cite{Kang95vpa, McLachlan16bsm, MuntheKaas16abs, Bogfjellmo19aso, Floystad20tup, Bogfjellmo22uat, Lejay22cgr}) as B-series methods cannot preserve volume in general.
We mention that finding a volume-preserving aromatic B-series method is the first step toward the creation of an exotic aromatic S-series method~\cite{Laurent20eab, Laurent21ocf, Laurent21ata,  Bronasco22ebs, Laurent23tue} that exactly preserves the invariant measure of ergodic stochastic differential equations as the algebraic conditions are similar (see, for instance,~\cite{Abdulle15lta}).

The recent work~\cite{Laurent23tab} introduces new tools from the calculus of variations, such as the aromatic bicomplex (see also~\cite{Anderson89tvb,Anderson92itt,Olver93aol,Mansfield10apg} and references therein), and these tools yield valuable insight of the form of the Taylor expansion of a volume-preserving methods.
In particular, it shows that aromatic Runge-Kutta methods do not preserve volume in general, while aromatic exponential methods are promising starting points.
To further understand the form of a volume-preserving method, the present work defines new tools on the aromatic bicomplex such as the Lie derivative, different concepts of symmetries, and Noether's theory in the aromatic context.

Let~$\tau$,~$\gamma$ be linear combinations of trees, the Lie derivative of~$\gamma$ in the direction of~$\tau$ is
\begin{equation}
\label{equation:def_perturbation_Lie_der}
\LL_\tau \gamma =\frac{d}{d\varepsilon}\Big|_{\varepsilon=0}[(\atree1101+\varepsilon \tau)\triangleright \gamma],
\end{equation}
where~$\triangleright$ is the substitution law of B-series~\cite{Hairer06gni,Chartier10aso,Calaque11tih} (see also~\cite{Bogfjellmo19aso,Bronasco22cef} for the substitution of aromatic and exotic aromatic series).
%Note that for~$F$ the elementary differential map and the differential problem~$y'(t)=F(\tau)(y(t))$,~$F(\gamma)(y)$ satisfies
%$$
%(F(\gamma)(y(t)))'=F(\LL_{\tau} \gamma)(y(t)).
%$$
%The dual of the Lie derivative is used in~\cite{Hairer99bea} (see also~\cite[Sec.\ts IX.9.1]{Hairer06gni}) for computing the modified equation of a B-series method in terms of trees.
The Lie derivative is also the pre-Lie version of the substitution law on B-series. It appears under the name pre-Lie insertion product in~\cite{Saidi10oap,Manchon11lpl,Saidi11adh} for the study of the freeness of the pre-Lie insertion algebra.
In the calculus of variations, the Lie-derivative defines symmetries, that are perturbations that leave the input unchanged at first order. This leads to the Noether theorem, that draws links between symmetries and conservation laws.

This paper gives a general definition of the Lie derivative on aromatic forms by using the framework given by the aromatic bicomplex. This allows us to define symmetries, to write an aromatic version of the Noether theorem, and to draw further links between variational calculus and numerical volume-preservation.
We give a concise introduction in Section~\ref{section:preliminaries} of the aromatic bicomplex and its properties, while Section~\ref{section:Noether} is devoted to the general definition of the Lie derivative, of the different symmetries, and the statement of the aromatic Noether theorem.
We then adapt the new approach and results to the study of volume preserving methods.

%\cite{Laurent23tue,Laurent21ocf,Laurent21ata,Laurent20eab}

%\cite{Iserles07bsm,Chartier07pfi}
%
%\cite{Butcher72aat,Hairer74otb} (see also the textbooks~\cite{Hairer06gni,Butcher16nmf,Butcher21bsa} and the review~\cite{McLachlan17bsa})
%\cite{Bogfjellmo22uat}
%\cite{McLachlan16bsm,MuntheKaas16abs,Bogfjellmo19aso,Floystad20tup}
%
%(see also~\cite{Kang95vpa})

%(see, for instance, the textbooks~\cite{Anderson89tvb,Olver93aol}, the introductory article~\cite{Anderson92itt}, and references therein).

%\modg{Who cites Anderson?}

\section{Preliminaries on the aromatic bicomplex}
\label{section:preliminaries}

In this section, we give a concise definition of the necessary tools and concepts required for the definition of the aromatic Euler-Lagrange complex. The notations and vocabulary are chosen to match with the literature of variational calculus, as we shall draw bridges between numerical analysis and variational calculus in Section~\ref{section:Noether}. We skip the technical details on the Euler and homotopy operators and refer the reader to~\cite{Laurent23tab} for more details.

\subsection{Aromatic forms and their derivatives}

While aromatic trees represent vector fields, we use aromatic forests to represent specific classes of homogeneous tensors and forms. This allows in particular to translate the technicalities of the infinite jet bundle~$J^\infty(\R^d)$ into straightforward combinatorics.
\begin{definition}
Let~$V$ be a finite set of nodes, that we split into vertices~$V^{\bullet}$ and covertices~$V^{\circ}$, and~$E\subset V\times V$ a set of oriented edges.
The covertices are numbered from~$1$ to~$p$, while the vertices are indistinguishable.
Each node in~$V$ is the source of exactly one edge, except the roots that have no outgoing edges, that we order and number from~$1$ to~$n$.
Any connected component of such a graph either has exactly one root, and is called a tree, or does not have a root, and is called an aroma.
We call aromatic forests such graphs, up to equivalence of graphs that preserve the numbering of the covertices and the roots. We write~$\FF_{n,p}$ the set of aromatic forests with~$n$ roots and~$p$ covertices and~$\FF_{n}=\FF_{n,0}$.
The number of nodes~$\abs{\gamma}$ of an aromatic forest~$\gamma$ is called the order of~$\gamma$.
%The elements of~$\FF_1$ are called aromatic trees, and the subset~$\TT$ of~$\FF_1$ that contains the trees without aromas is the set of Butcher trees.
\end{definition}

In the spirit of differential geometry, we alternatize aromatic forests to obtain aromatic forms using the wedge projection operator.
\begin{definition}
For~$\gamma\in \FF_{n,p}$, let~$\SS_n^{\bullet}$ (resp.\ts~$\SS_p^{\circ}$) be the set of permutations of the roots of~$\gamma$ (resp.\ts the covertices of~$\gamma$). The root and covertex wedges of~$\gamma$ are
$$
\wedge^{\bullet} \gamma = \frac{1}{n!} \sum_{\sigma\in \SS_{n}^{\bullet}} \varepsilon(\sigma) \sigma \gamma, \quad \wedge^{\circ} \gamma = \frac{1}{p!} \sum_{\sigma\in \SS_p^{\circ}} \varepsilon(\sigma) \sigma \gamma,
$$
where~$\varepsilon(\sigma)$ is the signature of the permutation~$\sigma$.
The wedge is~$\wedge=\wedge^{\bullet}\wedge^{\circ}=\wedge^{\circ}\wedge^{\bullet}$.
We extend the wedge on~$\Span(\FF_{n,p})$ by linearity and we gather aromatic forms in~$\Omega_{n,p}=\wedge\Span(\FF_{n,p})$ and~$\Omega_n=\Omega_{n,0}$.
\end{definition}

\begin{ex*}
%The wedge is a projection, i.e.,~$\wedge^2=\wedge$.
Let~$\gamma_1=\atree1101 \atree2101\in \FF_2$,~$\gamma_2=\atree2023\in \FF_{0,2}$,~$\gamma_3=\atree1111\atree1121\in \FF_{2,2}$, then
\[
\wedge \gamma_1=\frac{1}{2}(\atree1101 \atree2101-\atree2101 \atree1101) \in \Omega_2, \quad
\wedge \gamma_2=0, \quad
\wedge \gamma_3=\frac{1}{2}(\atree1111\atree1121-\atree1121\atree1111).
\]
\end{ex*}

The operations on the variational bicomplex make use of differentiations in the infinite jet bundle. In the aromatic context, we replace the differentiations by the operations of grafting and replacing nodes.
The horizontal derivative is defined using grafting operations, while the vertical derivative uses the replacing operation. The sign change in the definition of the total derivative is explained in~\cite[Rk.\ts 2.8]{Laurent23tab}. We mention that the horizontal and vertical derivatives were used in a different context on~$\Omega_1=\Span(\FF_1)$ respectively in~\cite{Chartier07pfi,Iserles07bsm} for~$d_H$ and in~\cite{Floystad20tup} for~$d_V$.
\begin{definition}
Let~$\gamma\in \FF_{n,p}$,~$r$ a root of~$\gamma$, and~$u\in V$ (possibly equal to~$r$), then~$D^{r\rightarrow u} \gamma$ returns a copy of~$\gamma$ where the node~$r$ is now a predecessor of~$u$. The operator~$D^r \gamma=\sum_{u\in V} D^{r\rightarrow u} \gamma$ grafts~$r$ to all possible nodes.
Let~$\gamma\in \FF_{n,p}$ and~$v\in V^\bullet$, then~$\gamma_{v\rightarrow \Circled{k}}$ is the forest obtained by replacing the node~$v$ by a new covertex~$\Circled{k}$. Similarly,~$\gamma_{\Circled{k}\rightarrow \tau}$ is the linear combination of forests obtained by replacing the covertex~$\Circled{k}$ by the tree~$\tau$ and grafting the predecessors of~$\Circled{k}$ to the nodes of~$\tau$ in all possible ways.
The horizontal, vertical, and total derivatives of~$\gamma\in\FF_{n,p}$ are
$$d_H \gamma=D^{r_n}\gamma,\quad d_V \gamma=\wedge \sum_{v\in V^{\bullet}} \gamma_{v\rightarrow \Circled{p+1}}, \quad d\gamma=(-1)^{n+p}d_H\gamma+ d_V\gamma.$$
We extend~$d_H$ and~$d_V$ by linearity into~$d_H\colon \Omega_{n,p} \rightarrow \Omega_{n-1,p}$ and~$d_V\colon \Omega_{n,p} \rightarrow \Omega_{n,p+1}$, with the convention~$d_H \gamma=0$ if~$\gamma\in \Omega_{0,p}$.
\end{definition}

%\begin{ex*}
%Let~$\gamma=\atree2111$ and~$r$ its root, then
%$$
%D^r \gamma=\atree2013+\atree2011,\quad
%\gamma_{r\rightarrow \Circled{2}}=\atree2122,\quad
%\gamma_{\Circled{1}\rightarrow \bullet}=\atree2101.
%$$
%\end{ex*}

\begin{ex*}
Consider~$\gamma_1=\atree1101 \in \Omega_1$,~$\gamma_2=\wedge \atree1101 \atree2101\in \Omega_2$, and~$\gamma_3=\wedge \atree1101 \atree1111\in \Omega_{2,1}$, then, we find
\[d_H\gamma_1=\atree1001, \quad d_V\gamma_1=\atree1111,\]
\[
d_H\gamma_2=\frac{1}{2}\Big(\atree2002 \atree1101+\atree2001 \atree1101-\atree1001 \atree2101-\atree3101\Big), \quad
d_V\gamma_2=\wedge \atree1111 \atree2101+\wedge \atree1101 \atree2111+\wedge \atree1101 \atree2112
,\]
\[
d_H\gamma_3=\frac{1}{2}\Big(\atree1011 \atree1101+\atree2111-\atree1001 \atree1111-\atree2112\Big), \quad
d_V\gamma_3=\wedge\atree1121 \atree1111=\frac{1}{2}(\atree1121 \atree1111-\atree1111 \atree1121)
.\]
\end{ex*}

The derivatives on aromatic forms naturally form complexes as justified by the following result. Note that the horizontal and vertical derivatives commute, while their equivalents in variational calculus anticommute~\cite{Anderson92itt}.
\begin{proposition}[\cite{Laurent23tab}]
\label{proposition:derivatives_squared}
The derivatives satisfy
$$
d_H^2=0, \quad d_V^2=0, \quad d_V d_H=d_H d_V, \quad d^2=0.
$$
\end{proposition}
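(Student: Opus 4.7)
The plan is to establish the three building-block identities $d_H^2 = 0$, $d_V^2 = 0$, and $d_V d_H = d_H d_V$ by a direct combinatorial inspection on aromatic forests, each exploiting the antisymmetrization built into $\Omega_{n,p}$ via the wedge operators, and then to derive $d^2 = 0$ algebraically.

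For $d_H^2 = 0$, I would expand, on $\gamma \in \Omega_{n,p}$,
$$d_H^2 \gamma \;=\; \sum_{u, u' \in V} D^{r_{n-1}\to u'}\,D^{r_n \to u}\,\gamma,$$
a sum over ordered pairs of target nodes. At the level of underlying graphs the two graftings act on independent edges and hence commute, so the expression is symmetric under exchanging the roles of $r_{n-1}$ and $r_n$; since $\gamma$ is antisymmetric in its last two root labels through $\wedge^\bullet$, the two contributions cancel. An analogous argument gives $d_V^2 = 0$: the iterated replacement
$$d_V^2 \gamma \;=\; \wedge\!\!\sum_{v,v' \in V^\bullet}\!\! \gamma_{v\to \Circled{p+1},\, v' \to \Circled{p+2}}$$
is symmetric in the pair $(v, v')$, whereas swapping the two new covertices introduces a sign through $\wedge^\circ$, killing the sum. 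For $d_V d_H = d_H d_V$, the grafting of the last root and the conversion of a vertex into a new covertex modify independent pieces of data, so they commute forest by forest; one only needs to check that the sets $V$ and $V^\bullet$ over which the two operations sum are accounted for consistently in both orderings.

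Finally, writing $d = (-1)^{n+p} d_H + d_V$ on $\Omega_{n,p}$ and keeping track that the sign flips when $d_H$ (respectively $d_V$) shifts the bidegree from $(n,p)$ to $(n-1,p)$ (respectively $(n,p+1)$), a short expansion yields
$$d^2\gamma \;=\; -d_H^2 \gamma + (-1)^{n+p}\bigl(d_V d_H - d_H d_V\bigr)\gamma + d_V^2 \gamma,$$
which vanishes by the three identities above. The main obstacle I anticipate is the commutation $d_V d_H = d_H d_V$: one must verify that the new vertex arising when $r_n$ ceases to be a root after $d_H$ is treated in exactly the same way in the opposite ordering, and that the wedge projections interact with grafting and replacement as expected. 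Once this combinatorial bookkeeping is settled, the remaining steps are essentially automatic.
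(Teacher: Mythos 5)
The paper itself gives no proof of this proposition: it is imported verbatim from \cite{Laurent23tab}, so there is no in-paper argument to compare against. Your direct combinatorial verification is the right approach and is essentially sound. The sign bookkeeping for $d^2$ is correct: with $d=(-1)^{n+p}d_H+d_V$ and the bidegree shifts $(n,p)\mapsto(n-1,p)$ and $(n,p)\mapsto(n,p+1)$, one indeed gets $d^2\gamma=-d_H^2\gamma+(-1)^{n+p}(d_Vd_H-d_Hd_V)\gamma+d_V^2\gamma$, which is consistent with the fact (emphasized in the paper) that here $d_H$ and $d_V$ \emph{commute} rather than anticommute. The $d_H^2$ cancellation via antisymmetry of $\wedge^\bullet$ in the last two root labels works, including the boundary cases where one former root is grafted onto the other or onto itself. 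Two details deserve to be made explicit. First, in $d_V^2$ the inner sum runs over $v'\in V^\bullet\setminus\{v\}$, since after $\gamma_{v\to\Circled{p+1}}$ the node $v$ is no longer a vertex; the diagonal terms, which would not cancel under the covertex swap, therefore simply do not occur. Second, for $d_Vd_H=d_Hd_V$ the point you flag is exactly the right one, and it splits into two checks: (i) after $d_H$ the node $r_n$ remains a vertex, so the subsequent $d_V$-sum includes the term replacing $r_n$ itself by $\Circled{p+1}$, and in the reverse order $D^{r_n}$ must still graft $r_n$ after it has been turned into a covertex (which is consistent with the definition, since $D^{r\to u}$ acts on nodes, not only on vertices); (ii) the term where $r_n$ is grafted onto the very node $v$ being replaced matches in both orders because $D^{r_n}$ sums over all of $V$, covertices included. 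With those two checks written out, and the observation that $D^{r_n}$ commutes with the wedge projection because grafting the last root is equivariant under permutations of the remaining roots and of the covertices, your plan closes completely.
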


\subsection{The aromatic bicomplex and the Euler-Lagrange complex}
\label{section:complexes}

The object of ultimate interest in variational calculus is the Euler-Lagrange complex, which requires defining the augmented bicomplex first.
The interior Euler operator~$I$ is given for an aromatic form~$\gamma\in\Omega_{0,1}$ by the combination of forms obtained by unplugging all the predecessors of the covertex of~$\gamma$, plugging back the predecessors on all the vertices in all possible ways, and multiplying by~$-1$ if the number of predecessors is odd. For instance, we have
\[
I \atree1011=0, \quad
I \atree2013=\atree2013, \quad
I \atree2012=\atree2013, \quad
I \atree2011=-\atree2013, \quad
I \atree1001 \atree1011=-\atree2013.
\]
For the sake of simplicity, we refer to~\cite[Sect.\ts 4.1]{Laurent23tab} for the precise definition of~$I\colon \Omega_{0,p}\rightarrow\Omega_{0,p}$.
%For the sake of simplicity and conciseness, we take over the definition of the interior Euler operator~$I$ from~\cite[Sect.\ts 4.1]{Laurent23tab}.
The variational derivative~$\delta_V=I\circ d_V$ and the interior Euler operator~$I$ satisfy
$$
I^2=I,\quad I d_H=0,\quad \delta_V^2=0.
$$
In particular,~$I$ is a projection on~$\II_p=I(\Omega_{0,p})$, the aromatic equivalent of the space of source forms.

The augmented aromatic bicomplex is the diagram drawn in Figure~\ref{figure:augmented_bicomplex} that displays the interactions between the different spaces of aromatic forms.
%The aromatic bicomplex shares similarities with the variational bicomplex~\cite{Anderson89tvb,Anderson92itt} as, for any fixed dimension~$d$, the elementary differential map sends the aromatic bicomplex to a subcomplex of the variational bicomplex.
The aromatic bicomplex can be seen as a generalised subcomplex of the variational bicomplex~\cite{Anderson89tvb,Anderson92itt} that focuses on specific classes of homogeneous forms and is fully independent of the dimension of the problem (see~\cite[Rk.\ts 2.8]{Laurent23tab}).
\begin{figure}[!ht]
$$\begin{tikzcd}
 & \vdots & \vdots & \vdots & \vdots & \\
    \dots \arrow{r}{d_H} & \Omega_{2,2} \arrow{r}{d_H} \arrow{u}{d_V} & \Omega_{1,2} \arrow{r}{d_H} \arrow{u}{d_V} & \Omega_{0,2} \arrow{u}{d_V} \arrow{r}{I} & \II_2 \arrow{u}{\delta_V} \arrow{r} & 0\\
    \dots \arrow{r}{d_H} & \Omega_{2,1} \arrow{r}{d_H} \arrow{u}{d_V} & \Omega_{1,1} \arrow{r}{d_H} \arrow{u}{d_V} & \Omega_{0,1} \arrow{u}{d_V} \arrow{r}{I} & \II_1 \arrow{u}{\delta_V} \arrow{r} & 0\\
    \dots \arrow{r}{d_H} & \Omega_{2} \arrow{r}{d_H} \arrow{u}{d_V} & \Omega_{1} \arrow{r}{d_H} \arrow{u}{d_V} & \Omega_{0} \arrow{u}{d_V} \arrow{ru}{\delta_V} & & \\
     & 0 \arrow{u} & 0 \arrow{u} & 0 \arrow{u} & &
\end{tikzcd}$$
\caption{The augmented aromatic bicomplex.}
	\label{figure:augmented_bicomplex}
\end{figure}

%\modg{We have two kind of vector fields:~$\tau\in\Omega_{1,0}$ (projectable) or~$\tau\in \Omega_{0,1}$ (evolutionary)!!! We thus define~$\mathfrak{X}=\Omega_{1,0}\oplus \Omega_{0,1}$ the space of aromatic vector fields.
%They represent elementary differentials of the form
%$$Z=X^i\partial_i+Y_i \theta^i (meh not really).$$
%We write~$\tau_p$ for projectable and~$\tau_e$ for evolutionary.
%}

%The elements of~$\Omega_1$ are called aromatic vector fields, while~$\Omega_0$ is the space of aromatic Lagrangians.
The bottom row of the aromatic bicomplex is similar to the De Rham complex. It extends into the edge complex~\eqref{equation:Euler_Lagrange_complex}, called the aromatic Euler-Lagrange complex.
\begin{equation}
\label{equation:Euler_Lagrange_complex}
\begin{tikzcd}
\dots \arrow{r}{d_H} & \Omega_{2} \arrow{r}{d_H} & \Omega_{1} \arrow{r}{d_H} & \Omega_{0} \arrow{r}{\delta_V} & \II_1 \arrow{r}{\delta_V}& \II_2 \arrow{r}{\delta_V}& \dots
\end{tikzcd}
\end{equation}
We introduce some vocabulary to further motivate the importance of the aromatic Euler-Lagrange complex.
%The elements of~$\Omega_{n}$ represent some~$(d-n)$-differential forms when applied in~$\R^d$. The aromatic formalism allows us to discard the dependence in the dimension~$d$.
While we call aromatic forms the elements of~$\Omega_{n,p}$, the space~$\Omega_{1}$ spanned by aromatic trees represent both vector fields and differential forms in variational calculus, so that we shall call the elements of~$\Omega_{1}$ aromatic vector fields or aromatic forms depending on the context.
The elements of~$\Omega_{0}$ spanned by multi-aromas represent Lagrangians in variational calculus and volume forms in the context of volume-preservation, so that we call its elements aromatic Lagrangians.
The source forms in~$\II_1$ represent differential equations.
The Euler-Lagrange complex~\eqref{equation:Euler_Lagrange_complex} is the rigorous implementation of the following diagram (see~\cite{Anderson92itt}).
\[\begin{tikzcd}
\dots \arrow{r}{\text{curl}}
& \text{vector fields} \arrow{r}{\text{div}}
& \text{Lagrangians} \arrow{r}{\text{Euler-Lagrange}}
&[3.5em] \text{diff.\ts eq.} \arrow{r}{\text{Helmholtz}}
&[2.0em] \dots
\end{tikzcd}\]

The crucial property of the aromatic bicomplex is its exactness, that is, the kernel of a map in the complex coincides with the image of the preceding map.
For instance in the context of variational calculus~\cite{Anderson92itt}, a vector field is a curl if and only if it is divergence-free, and some differential equations are the Euler-Lagrange equations associated to a Lagrangian if and only if they are in the kernel of the Helmholtz operator\footnote{This characterization is often called the inverse problem for Lagrangian mechanics in the literature.}.
We refer the reader to~\cite{Laurent23tab} for the detailed proof of the exactness of the augmented aromatic bicomplex and for the explicit expressions of the associated homotopy operators.
\begin{theorem}[\cite{Laurent23tab}]
\label{theorem:exactness}
The horizontal and vertical sequences of the aromatic Euler-Lagrange complex~\eqref{equation:Euler_Lagrange_complex} and the augmented aromatic bicomplex are exact, that is, there exist homotopy operators
\[
h_H\colon \Omega_{n,p}\rightarrow \Omega_{n+1,p}, \quad
h_V\colon \Omega_{n,p}\rightarrow \Omega_{n,p-1}, \quad
\mathfrak{h}_H\colon \Omega_{0,p}\rightarrow \Omega_{1,p}, \quad
\mathfrak{h}_V\colon \II_p\rightarrow \II_{p-1},
\]
such that the following identities hold:
\begin{align}
\label{equation:horizontal_homotopy}
\gamma&=(d_H  h_H+h_H  d_H) \gamma, \quad \gamma \in \Omega_{n,p}, \quad n\geq 1, \quad p\geq 0,\\
\label{equation:vertical_homotopy}
\gamma&=(d_V  h_V+h_V  d_V) \gamma, \quad \gamma \in \Omega_{n,p}, \quad n\geq 0, \quad p\geq 1,\\
%\label{equation:homotopy_equality_variational}
\gamma&=(d_H h_H +h_V \delta_V)\gamma, \quad \gamma \in \Omega_{0}, \nonumber\\
\label{equation:augmented_horizontal_homotopy}
\gamma&=(I+d_H \mathfrak{h}_H) \gamma, \quad \gamma \in \Omega_{0,p}, \quad p\geq 1,\\
%\label{equation:variational_homotopy_1}
%\gamma&=(\delta_V  h_V+\mathfrak{h}_V  \delta_V) \gamma, \quad \gamma \in \II_1,\\
%\label{equation:variational_homotopy}
\gamma&=(\delta_V  \mathfrak{h}_V+\mathfrak{h}_V  \delta_V) \gamma, \quad \gamma \in \II_p, \quad p\geq 1. \nonumber
\end{align}
%where~$\mathfrak{h}_V=I h_V$ on~$\II_p$ with~$p>1$ and~$\mathfrak{h}_V=h_V$ on~$\II_1$.
\end{theorem}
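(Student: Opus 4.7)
The plan is to establish exactness of the augmented aromatic bicomplex by constructing each of the homotopy operators $h_H$, $h_V$, $\mathfrak{h}_H$, $\mathfrak{h}_V$ explicitly as combinatorial manipulations on aromatic forests, and then to verify the five homotopy identities one by one by a careful counting argument on a generic forest $\gamma \in \FF_{n,p}$. Because $d_H$, $d_V$, and $I$ all act locally on nodes, each identity reduces to a combinatorial statement at the level of forests, and by linearity it suffices to check it on wedge-projected basis forests.

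For the vertical homotopy $h_V\colon \Omega_{n,p}\to\Omega_{n,p-1}$, the natural construction is to pick the covertex labeled $p$ and convert it back into an ordinary vertex, normalised by a factor involving the number of vertices. Applied to a form in $\Omega_{n,p}$, the composition $d_V h_V+h_V d_V$ then produces, for each vertex and each covertex of $\gamma$, a copy of $\gamma$ where one node is replaced and immediately restored; these contributions sum, after tracking the wedge signs, to $(|V^\bullet|+p)$ copies of $\gamma$, and the normalising factor in $h_V$ is chosen precisely to cancel this combinatorial weight. For the horizontal homotopy $h_H\colon \Omega_{n,p}\to\Omega_{n+1,p}$ on $n\geq 1$, I would dually define a degrafting operation: pick a non-root node $v$, cut the edge from $v$ to its successor, and promote $v$ to a new root $r_{n+1}$, summed over all choices of $v$ with a normalisation by the number of non-root nodes. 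Because $d_H$ grafts $r_{n+1}$ to all nodes while $h_H$ cuts such edges, the terms in $d_H h_H+h_H d_H$ pair up bijectively except for a single diagonal contribution reconstructing $\gamma$, and the sign $(-1)^{n+p}$ in the definition of $d$ must be propagated consistently.

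The main obstacle is the augmented part involving the interior Euler operator $I$. The identity $\gamma=(I+d_H\mathfrak{h}_H)\gamma$ on $\Omega_{0,p}$ requires first establishing $I^2=I$ and $I d_H=0$, which make $I$ a projection onto source forms $\II_p$, and then constructing $\mathfrak{h}_H$ by a partial degrafting that mirrors the definition of $I$: instead of unplugging all predecessors of the covertex, one unplugs them one at a time and turns the resulting dangling piece into a new root, with signs and multiplicities chosen so that $d_H\mathfrak{h}_H$ restores every term of $\gamma$ except its image under $I$. The operator $\mathfrak{h}_V\colon\II_p\to\II_{p-1}$ is the most delicate, as one must arrange that $\mathfrak{h}_V$ lands in source forms and satisfies $\delta_V\mathfrak{h}_V+\mathfrak{h}_V\delta_V=\mathrm{id}$; a natural attempt is $\mathfrak{h}_V=I\circ h_V$, and the proof then reduces to verifying, using $I d_H=0$ together with the previously-established vertical homotopy identity and $\delta_V=I d_V$, that the remainder terms lie in $\ker I$. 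Throughout, one can induct on the order of $\gamma$ since each operator shifts the order by a bounded amount, reducing the verification to finite combinatorial checks at each order.
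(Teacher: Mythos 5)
First, a remark on the comparison itself: the present paper does not prove this theorem at all --- it is imported verbatim from \cite{Laurent23tab}, and the author explicitly defers the construction of the homotopy operators to that reference. So the benchmark is the proof there. Measured against it, your treatment of the vertical part is essentially right: $h_V$ is indeed the normalised contraction $\frac{1}{\abs{\gamma}}i_{\atree1101}$ (consistent with the remark following Proposition \ref{proposition:properties_Lie}), the Koszul-type count $\abs{V^\bullet}+p=\abs{\gamma}$ gives \eqref{equation:vertical_homotopy}, and your suggestion $\mathfrak{h}_V=I\circ h_V$ does work once one has $Id_H=0$, $d_Hd_V=d_Vd_H$, the commutation of $h_V$ with $d_H$, and identity \eqref{equation:augmented_horizontal_homotopy} in hand.

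The genuine gap is the horizontal homotopy, which is the hard part of the theorem. Your proposed $h_H$ --- cut a non-root node from its successor, promote it to the root $r_{n+1}$, sum over nodes, and divide by the number of non-root nodes --- does not satisfy \eqref{equation:horizontal_homotopy}, and the claimed bijective pairing of cross terms fails. Concretely, take $\gamma=\atree2101\in\Omega_1$, so $d_H\gamma=\atree2001+\atree2002$. Your rule gives $h_H\atree2001=\frac12\bigl(\atree2101+\atree1001\,\atree1101\bigr)$ and $h_H\atree2002=\atree2101$, while $h_H\atree2101=\wedge\bigl(\atree1101\,\atree1101\bigr)=0$ by antisymmetry of the two identical roots; hence
$(d_Hh_H+h_Hd_H)\gamma=\frac32\,\atree2101+\frac12\,\atree1001\,\atree1101\neq\gamma$,
and no scalar renormalisation can remove the spurious disconnected term. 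This mirrors the classical variational bicomplex, where the horizontal homotopy is not a single normalised ``inverse total derivative'' but must be assembled from higher interior Euler operators with carefully weighted coefficients; the aromatic construction in \cite{Laurent23tab} is likewise a more elaborate combinatorial operator, and the same difficulty undermines your sketch of $\mathfrak{h}_H$ (where the ``signs and multiplicities chosen so that\dots'' is precisely the content to be proved) as well as the unaddressed identity $\gamma=(d_Hh_H+h_V\delta_V)\gamma$ on $\Omega_0$. As it stands, the core of the exactness proof is missing.
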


In particular, if one is interested in the description of all of the aromatic vector fields~$\gamma\in \Omega_1$ of vanishing divergence~$d_H\gamma=0$, then Theorem~\ref{theorem:exactness} states that there exists an aromatic form~$\eta$ such that~$\gamma=d_H \eta\in \Img(d_H)$.
The problem of finding a volume-preserving numerical method translates via backward error analysis~\cite{Hairer06gni} into the precise description of~$\Ker(d_H|_{\Omega_1})$, so that the Euler-Lagrange complex becomes a strong tool in this context. We further describe~$\Ker(d_H|_{\Omega_1})$ using symmetries and Noether's theorem.

\section{Noether's theory on the aromatic bicomplex}
\label{section:Noether}

In this section, we define the Lie derivative of aromatic forms, study its properties, and use it to rewrite the Noether theorem in the context of the aromatic bicomplex. We then apply these new tools in the context of volume-preservation.

\subsection{The aromatic Lie derivative}

In the spirit of the differential geometry literature~\cite{Lee13its}, we use a Cartan formula to define the aromatic Lie derivative on aromatic forms. We then present the different properties of the Lie derivative.
\begin{definition}
For~$\gamma\in\FF_{n,p}$, the contraction of~$\gamma$ in direction of~$\tau\in \FF_1$ is
$$i_\tau \gamma=p\gamma_{\Circled{p}\rightarrow\tau},$$
where~$i_\tau\gamma=0$ if~$p=0$.
We extend the contraction in~$i_\tau\colon \Omega_{n,p}\rightarrow \Omega_{n,p-1}$ for~$\tau\in \Omega_{1}$ by linearity.
For an aromatic form~$\gamma\in \Omega_{n,p}$ and an aromatic vector field~$\tau \in \Omega_1$, the aromatic Lie derivative of~$\gamma$ in the direction of~$\tau$ is given by the Cartan formula:
\begin{equation}
\label{equation:Cartan}
\LL_\tau \gamma = (d i_\tau +i_\tau d)\gamma.
\end{equation}
\end{definition}

%\medskip
%\textbf{Homotopy theory:}
Thanks to the homotopy identities of Theorem~\ref{theorem:exactness}, the Lie derivative satisfies the following identities.
\begin{proposition}
\label{proposition:properties_Lie}
For~$\tau\in \Omega_1$, the Lie derivative~$\LL_\tau \gamma\colon \Omega_{n,p}\rightarrow \Omega_{n,p}$ satisfies
\begin{align}
\label{equation:vertical_Cartan}
\LL_\tau \gamma &= (d_V i_\tau +i_\tau d_V)\gamma, \quad \gamma\in \Omega_{n,p},\\
\LL_\tau \gamma &= (d_H \Liehomotopy_\tau +\Liehomotopy_\tau d_H)\gamma, \quad \gamma\in \Omega_{n}, \quad n>0, \nonumber\\
\LL_\tau \gamma &= (d_H \mathfrak{\Liehomotopy}_\tau +i_\tau \delta_V)\gamma, \quad \gamma\in \Omega_{0}, \nonumber
\end{align}
where~$\Liehomotopy_\tau=\LL_\tau h_H$ and~$\mathfrak{\Liehomotopy}_\tau=\LL_\tau \mathfrak{h}_H$ are the Lie homotopy operators.
%$$\Liehomotopy_\tau=i_\tau h_H d_V, \quad \mathfrak{\Liehomotopy}_\tau=i_\tau \mathfrak{h}_H d_V.$$
\end{proposition}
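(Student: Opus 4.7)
The plan is to derive all three identities by combining the Cartan formula $\LL_\tau \gamma = (d i_\tau + i_\tau d)\gamma$ with the homotopy identities of Theorem \ref{theorem:exactness}, after first establishing a single combinatorial commutation that underlies all three statements.

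The central observation is that $d_H$ (which grafts the last root $r_n$ to all nodes) and $i_\tau$ (which replaces the last covertex $\Circled{p}$ by $\tau$ and distributes its predecessors among the nodes of $\tau$) act on disjoint combinatorial data of an aromatic forest, so that $d_H i_\tau = i_\tau d_H$. A direct graph-theoretic check confirms this: in $i_\tau d_H \gamma$, the case where $d_H$ first grafts $r_n$ onto $\Circled{p}$ and $i_\tau$ then sends $r_n$ to a node of $\tau$ exactly reproduces the graftings of $r_n$ onto nodes of $\tau$ in $d_H i_\tau \gamma$, while all other graftings match termwise; the combinatorial factor $p$ coming from $i_\tau$ is preserved in both orders.

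For identity 1, I substitute $d = (-1)^{n+p}d_H + d_V$ into the Cartan formula, tracking the bidegree shift of $i_\tau \gamma \in \Omega_{n,p-1}$. The $d_H$ cross terms collect into $(-1)^{n+p}(i_\tau d_H - d_H i_\tau)\gamma = 0$, leaving $\LL_\tau \gamma = (d_V i_\tau + i_\tau d_V)\gamma$; this simultaneously confirms that $\LL_\tau$ preserves the bidegree $\Omega_{n,p} \to \Omega_{n,p}$. For identity 2, I first derive $[\LL_\tau, d_H] = 0$ from identity 1 using $d_V d_H = d_H d_V$ (Proposition \ref{proposition:derivatives_squared}) and the commutation $d_H i_\tau = i_\tau d_H$. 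Applying $\LL_\tau$ to the horizontal homotopy $\gamma = (d_H h_H + h_H d_H)\gamma$ of \eqref{equation:horizontal_homotopy} (valid for $n \geq 1$) and commuting $\LL_\tau$ past the outer $d_H$ yields
$$\LL_\tau \gamma = d_H(\LL_\tau h_H)\gamma + (\LL_\tau h_H) d_H \gamma = d_H \Liehomotopy_\tau \gamma + \Liehomotopy_\tau d_H \gamma.$$

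For identity 3, I use that $i_\tau \gamma = 0$ for $\gamma \in \Omega_0 = \Omega_{0,0}$, so identity 1 collapses to $\LL_\tau \gamma = i_\tau d_V \gamma$. Applying the augmented horizontal homotopy \eqref{equation:augmented_horizontal_homotopy} to $d_V \gamma \in \Omega_{0,1}$ gives $d_V \gamma = \delta_V \gamma + d_H \mathfrak{h}_H d_V \gamma$; contracting with $i_\tau$ and commuting it past $d_H$ produces $\LL_\tau \gamma = i_\tau \delta_V \gamma + d_H(i_\tau \mathfrak{h}_H d_V \gamma)$, which is the announced identity once $\mathfrak{\Liehomotopy}_\tau$ is interpreted on $\Omega_0$ as the natural extension of $\LL_\tau \mathfrak{h}_H$ given by $i_\tau \mathfrak{h}_H d_V$. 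The main technical obstacle is the combinatorial verification of $d_H i_\tau = i_\tau d_H$ with matching multiplicities, because this commutation silently powers the reduction of the Cartan formula to its purely vertical form and the passage of $\LL_\tau$ through every homotopy identity; once it is in hand, the rest is a mechanical application of Theorem \ref{theorem:exactness}.
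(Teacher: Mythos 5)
Your proof is correct and follows essentially the same route as the paper: substitute $d=(-1)^{n+p}d_H+d_V$ into the Cartan formula using $[d_H,i_\tau]=0$ to get \eqref{equation:vertical_Cartan}, then feed the horizontal homotopy identity \eqref{equation:horizontal_homotopy} (resp.\ the augmented identity \eqref{equation:augmented_horizontal_homotopy} applied to $d_V\gamma$) into $\LL_\tau\gamma=i_\tau d_V\gamma$ for the remaining two identities. The only differences are presentational — you sketch the combinatorial check of $d_Hi_\tau=i_\tau d_H$, which the paper simply asserts, and you phrase the second identity via $[\LL_\tau,d_H]=0$ rather than by directly commuting $i_\tau d_V$ through the homotopy terms.
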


\begin{proof}
As~$d_H$ and~$i_\tau$ commute, replacing~$d=(-1)^{n+p}d_H+d_V$ in~\eqref{equation:Cartan} gives~\eqref{equation:vertical_Cartan}. We then deduce from the expression~\eqref{equation:vertical_Cartan} that~$\LL_\tau \gamma\colon \Omega_{n,p}\rightarrow \Omega_{n,p}$.
For~$n>0$ and~$p=0$, the horizontal homotopy identity~\eqref{equation:horizontal_homotopy} yields
\[
\LL_\tau \gamma = i_\tau d_V\gamma
=i_\tau d_V d_H h_H \gamma+i_\tau d_V h_H d_H \gamma
=d_H \Liehomotopy_\tau \gamma+\Liehomotopy_\tau d_H \gamma,
\]
where we used that~$d_H$ commutes with~$i_\tau$ and~$d_V$ and that~$d_V$ commutes with~$h_H$.
Similarly, the augmented horizontal homotopy identity~\eqref{equation:augmented_horizontal_homotopy} yields the expression of~$\LL_\tau$ on~$\Omega_{0}$.
\end{proof}

\begin{remark}
The vertical homotopy operator is linked to the contraction operation by the identity~$i_{\atree1101}\gamma=\abs{\gamma}h_V\gamma$. Thus, equations~\eqref{equation:vertical_homotopy} and~\eqref{equation:vertical_Cartan} yield
\[\LL_{\atree1101} \gamma = \abs{\gamma}\gamma.\]
The equivalent of this property is used in~\cite{MuntheKaas18lbs} with planar forests to obtain the expansion of the Grossman-Larson exponential.
\end{remark}

The Lie derivative naturally realises a Lie algebra structure.
\begin{proposition}
\label{proposition:Lie_structure}
For~$\tau_1$,~$\tau_2\in \Omega_1$, define the commutator on aromatic vector fields by
$$\llbracket\tau_1,\tau_2\rrbracket:=\LL_{\tau_1}\tau_2-\LL_{\tau_2}\tau_1.$$
Then,~$(\Omega_1,\llbracket . , . \rrbracket)$ is a Lie algebra and~$\LL$ is a Lie algebra representation, that is, for~$\gamma\in \Omega_n$,
$$
[\LL_{\tau_1},\LL_{\tau_2}] \gamma :=\LL_{\tau_1}\LL_{\tau_2} \gamma-\LL_{\tau_2}\LL_{\tau_1} \gamma=\LL_{\llbracket\tau_1,\tau_2\rrbracket} \gamma.
$$
\end{proposition}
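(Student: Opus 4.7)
The antisymmetry $\llbracket \tau_1, \tau_2 \rrbracket = -\llbracket \tau_2, \tau_1 \rrbracket$ is built into the definition, so proving $(\Omega_1, \llbracket \cdot, \cdot \rrbracket)$ is a Lie algebra reduces to the Jacobi identity. My plan is to prove the representation identity first, and then derive Jacobi from it as a formal consequence. Given $\LL_{\llbracket \tau_i, \tau_j \rrbracket} = [\LL_{\tau_i}, \LL_{\tau_j}]$, expanding
\[
\llbracket \llbracket \tau_i, \tau_j \rrbracket, \tau_k \rrbracket = \LL_{\llbracket \tau_i, \tau_j \rrbracket} \tau_k - \LL_{\tau_k} \llbracket \tau_i, \tau_j \rrbracket = \LL_{\tau_i} \LL_{\tau_j} \tau_k - \LL_{\tau_j} \LL_{\tau_i} \tau_k - \LL_{\tau_k} \LL_{\tau_i} \tau_j + \LL_{\tau_k} \LL_{\tau_j} \tau_i
\]
and summing over the three cyclic shifts, a direct bookkeeping shows that each term of the form $\LL_a \LL_b \tau_c$ appears exactly twice, once with each sign, so the Jacobi sum vanishes identically in $\Omega_1$. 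No faithfulness assumption on $\LL$ is needed.

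The core step is therefore the representation identity $[\LL_{\tau_1}, \LL_{\tau_2}] \gamma = \LL_{\llbracket \tau_1, \tau_2 \rrbracket} \gamma$ for $\gamma \in \Omega_n$. I plan to follow the classical Cartan argument. From the definition \eqref{equation:Cartan} and $d^2 = 0$ one gets $[\LL_\tau, d] = 0$ immediately. Commuting $\LL_{\tau_1}$ past the Cartan expansion of $\LL_{\tau_2}$ then yields
\[
[\LL_{\tau_1}, \LL_{\tau_2}] = d\,\bigl[\LL_{\tau_1}, i_{\tau_2}\bigr] + \bigl[\LL_{\tau_1}, i_{\tau_2}\bigr]\,d,
\]
so the representation identity reduces to the key commutator identity $\bigl[\LL_{\tau_1}, i_{\tau_2}\bigr] = i_{\llbracket \tau_1, \tau_2 \rrbracket}$, which, once established, gives $[\LL_{\tau_1}, \LL_{\tau_2}] = d\,i_{\llbracket \tau_1, \tau_2 \rrbracket} + i_{\llbracket \tau_1, \tau_2 \rrbracket}\,d = \LL_{\llbracket \tau_1, \tau_2 \rrbracket}$ by another application of Cartan's formula to $\llbracket \tau_1, \tau_2 \rrbracket \in \Omega_1$.

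The main obstacle is the combinatorial verification of this commutator identity on $\Omega_{n,p}$. The left-hand side is not manifestly antisymmetric in $(\tau_1, \tau_2)$, whereas the right-hand side is, so the identity encodes a nontrivial balance. Using the reduced Cartan formula \eqref{equation:vertical_Cartan}, both sides reduce to compositions of the node-replacement $d_V$ and the covertex-substitution $i$ on aromatic forests. My strategy is to trace, for each forest $\gamma$, the fate of the predecessors of the covertex $\Circled{p}$ and of the new covertex created by $d_V$ under the two orderings: the \emph{diagonal} contributions to $\LL_{\tau_1} i_{\tau_2} \gamma$, in which the node of $\gamma$ replaced by $d_V$ coincides with the one fed to $\tau_2$, reassemble into $i_{\LL_{\tau_1} \tau_2} \gamma$, and symmetrically for $i_{\tau_2} \LL_{\tau_1} \gamma$; the remaining \emph{off-diagonal} pieces cancel in pairs using the anti-commutativity $i_{\tau_1} i_{\tau_2} = - i_{\tau_2} i_{\tau_1}$ and the antisymmetry built into $\wedge^{\circ}$. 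A more conceptual alternative is to work directly from the substitution characterization $\LL_\tau \gamma = \tfrac{d}{d\varepsilon}\big|_0 (\atree1101 + \varepsilon \tau) \triangleright \gamma$ recalled in the introduction: exploiting the associativity of $\triangleright$, as in the pre-Lie insertion analysis of \cite{Saidi10oap,Manchon11lpl,Saidi11adh}, one identifies $[\LL_{\tau_1}, \LL_{\tau_2}]$ with the infinitesimal action of $\tau_1 \triangleright \tau_2 - \tau_2 \triangleright \tau_1$, which is precisely $\llbracket \tau_1, \tau_2 \rrbracket$.
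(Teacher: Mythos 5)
Your proposal is correct in outline, but it takes a genuinely different route from the paper. The paper works only with $\gamma\in\Omega_n$ (so $p=0$), where $\LL_\tau\gamma=i_\tau d_V\gamma$ localizes as a sum of single-node insertions $\LL_\tau^v\gamma=i_\tau(\gamma_{v\to\Circled{1}})$; it splits $\LL_{\tau_1}\LL_{\tau_2}\gamma$ into the off-diagonal sum $\sum_{v\neq w}\LL_{\tau_1}^w\LL_{\tau_2}^v\gamma$, which is symmetric in $(\tau_1,\tau_2)$ because the localized operators commute, plus the diagonal term $\LL_{\LL_{\tau_1}\tau_2}\gamma$, so the representation identity drops out by antisymmetrization; the Jacobi identity is then obtained by citing the pre-Lie property of the insertion product (the same computation with $\gamma\in\Omega_1$). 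You instead run the classical Cartan calculus, reducing everything to the operator identity $[\LL_{\tau_1},i_{\tau_2}]=i_{\llbracket\tau_1,\tau_2\rrbracket}$ on $\Omega_{n,p}$, and you deduce Jacobi formally from the representation identity; your cyclic cancellation is valid, and it is a good observation that no faithfulness of $\LL$ is needed since the bracket is itself defined through $\LL$. What your route buys is a stronger statement (the representation identity on all of $\Omega_{n,p}$, not just $p=0$) and a self-contained proof of Jacobi; what it costs is that the entire burden is shifted onto the key commutator identity, which you only sketch and which is strictly harder than what the proposition requires. Be aware that this identity is not the verbatim classical one: in differential geometry $\LL_XY$ is already the bracket, whereas here $\LL_{\tau_1}\tau_2$ is only the non-antisymmetric insertion, so the term $-i_{\LL_{\tau_2}\tau_1}$ must be produced by the $d_Vi_{\tau_1}$ half of the Cartan formula acting on the contracted covertex (for instance, on $\gamma=\atree1111$ one has $i_{\tau_2}\gamma=\tau_2$ and $\LL_{\tau_1}\gamma=d_Vi_{\tau_1}\gamma=d_V\tau_1$, whence $[\LL_{\tau_1},i_{\tau_2}]\gamma=\LL_{\tau_1}\tau_2-i_{\tau_2}d_V\tau_1=\llbracket\tau_1,\tau_2\rrbracket$), and the grafting of the predecessors of that covertex must be tracked through both orderings of the operators. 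If you only need the proposition as stated, restricting to $p=0$ and adopting the paper's node-localization turns your ``diagonal versus off-diagonal'' bookkeeping into a two-line symmetry argument.
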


\begin{proof}
The Jacobi identity for the bracket~$\llbracket . , . \rrbracket$ is a consequence of the pre-Lie property of the Lie derivative~\cite{Saidi10oap,Manchon11lpl,Saidi11adh}, extended straightforwardly to aromatic trees.
Define for~$v\in V$ a node of~$\gamma$,~$\LL_{\tau}^v\gamma=i_\tau (\gamma_{v\rightarrow\Circled{1}})$. Then, we observe
\[
\LL_{\tau_1}\LL_{\tau_2} \gamma
%=\sum_{\underset{v\neq w}{v, w\in V_\gamma}} \LL_{\tau_1}^w \LL_{\tau_2}^v\gamma+\sum_{v\in V_\gamma}\sum_{w\in V_{\tau_2}} \LL_{\tau_1}^w\LL_{\tau_2}^v\gamma
=\sum_{\underset{v\neq w}{v, w\in V}} \LL_{\tau_1}^w\LL_{\tau_2}^v\gamma+ \LL_{\LL_{\tau_1}\tau_2}\gamma.
\]
As~$\LL_{\tau_1}^w$ and~$\LL_{\tau_2}^v$ commute, a similar expression for~$\LL_{\tau_2}\LL_{\tau_1} \gamma$ yields the result.
\end{proof}

Thanks to equation~\eqref{equation:vertical_Cartan}, the classical geometric properties of the Lie derivative extend to aromatic forms.
\begin{proposition}
\label{proposition:product_rule_Lie}
For~$\tau\in \Omega_1$, the Lie derivative~$\LL_\tau$ commutes with the horizontal and vertical derivatives~$d_H$ and~$d_V$. In particular, we have~$[\LL_\tau,d]=0$.
Moreover, for~$\mu\in\Omega_0$,~$\tau\in \Omega_1$, and~$\gamma\in \Omega_{n}$, the following product rule holds,
$$\LL_{\tau} (\mu\gamma)=\mu(\LL_{\tau} \gamma)+(\LL_{\tau} \mu)\gamma.$$
\end{proposition}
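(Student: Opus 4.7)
The plan is to reduce everything to the vertical Cartan formula $\LL_\tau = d_V i_\tau + i_\tau d_V$ of Proposition \ref{proposition:properties_Lie}, which already packages the technical content of the horizontal homotopy away.

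For the commutation with $d_V$, I would use $d_V^2=0$ from Proposition \ref{proposition:derivatives_squared}: both $d_V\LL_\tau$ and $\LL_\tau d_V$ then collapse to $d_V i_\tau d_V$. For the commutation with $d_H$, the ingredients are $d_H d_V = d_V d_H$ from Proposition \ref{proposition:derivatives_squared} together with the fact that $d_H$ and $i_\tau$ commute (which was already invoked in the proof of Proposition \ref{proposition:properties_Lie}: $d_H$ grafts the last root, whereas $i_\tau$ replaces the last covertex and redistributes its predecessors, and these operations act on disjoint parts of the forest since a root cannot be a predecessor of anything). Expanding $d_H\LL_\tau$ and $\LL_\tau d_H$ both yield $d_H i_\tau d_V$. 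Finally, since $\LL_\tau$ preserves the bidegree $(n,p)$, the sign $(-1)^{n+p}$ in $d=(-1)^{n+p}d_H+d_V$ factors out identically on both sides of $[\LL_\tau,d]$, so $[\LL_\tau,d]=0$ follows from the two commutations already established.

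For the product rule, I would again use the vertical Cartan formula. Since $\gamma\in\Omega_n=\Omega_{n,0}$ and $\mu\in\Omega_0=\Omega_{0,0}$ have no covertices, $i_\tau\gamma=i_\tau\mu=i_\tau(\mu\gamma)=0$, so $\LL_\tau(\mu\gamma)=i_\tau d_V(\mu\gamma)$, $\LL_\tau\gamma=i_\tau d_V\gamma$, and $\LL_\tau\mu=i_\tau d_V\mu$. The crux is a derivation property for $d_V$ on the disjoint-union product: replacing a vertex of $\mu\gamma$ by a new covertex picks either a vertex of $\mu$ or of $\gamma$, giving
\[
d_V(\mu\gamma)=(d_V\mu)\gamma+\mu(d_V\gamma).
\]
Each summand lies in $\Omega_{n,1}$ with its single covertex in exactly one of the two factors, so $i_\tau$ simply substitutes $\tau$ in that factor, and the product rule follows.

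The main technical nuisance is checking the Leibniz rule for $d_V$ on a concatenation of forests: although $d_V$ is defined with $\wedge=\wedge^\bullet\wedge^\circ$, only a single covertex is produced (so $\wedge^\circ$ is trivial) and $\mu$ carries no roots (so $\wedge^\bullet$ on $\mu\gamma$ reduces to the antisymmetrization already built into $\gamma\in\Omega_n$). Hence the wedge does not obstruct the splitting and the derivation property boils down to a short combinatorial bookkeeping.
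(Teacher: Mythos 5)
Your proof is correct and follows essentially the same route as the paper: reduce everything to the vertical Cartan formula $\LL_\tau=d_Vi_\tau+i_\tau d_V$, use $d_V^2=0$ and the commutation of $d_H$ with $d_V$ and $i_\tau$ for the first claims, and derive the product rule from the Leibniz rule for $d_V$ on concatenations. The only minor imprecision is that for $\gamma\in\Omega_{n,p}$ with $p\geq 1$ the expansion of $d_H\LL_\tau\gamma$ also contains the term $d_Hd_Vi_\tau\gamma$, which is matched by $d_Vi_\tau d_H\gamma$ in $\LL_\tau d_H\gamma$ via the same commutations, so the conclusion is unaffected.
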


\begin{proof}
The Lie derivative commutes with~$d_H$ as~$d_H$ commutes with~$d_V$ and~$i_\tau$. Proposition~\eqref{proposition:derivatives_squared} yields
\[\LL_\tau d_V=d_V i_\tau d_V=d_V \LL_\tau.\]
The product rule is a consequence of the product rule for~$d_V$, that is,
$$d_V (\mu\gamma)=\mu(d_V \gamma)+(d_V \mu)\gamma.$$
Hence the result.
\end{proof}

%\medskip
%\textbf{Numerical analysis:}
The two Hopf algebra structures on standard B-series are associated to the composition and substitution laws~\cite{Connes98har,Chartier10aso,Calaque11tih} (see also~\cite{Bogfjellmo19aso,Bronasco22ebs,Rahm22aoa}).
The associated pre-Lie laws on~$\Omega_1$ are the grafting product, given for~$\tau$,~$\gamma\in \Omega_1$,~$r_\tau$ the root of~$\tau$, and~$V_\gamma$ the vertices of~$\gamma$ by
$$\tau \curvearrowright \gamma=\sum_{v\in V_{\gamma}} D^{r_\tau \rightarrow v} (\tau \gamma),$$
and the insertion product~\cite{Saidi10oap,Manchon11lpl,Saidi11adh}, that coincides with the Lie derivative~$\LL_\tau\gamma$.
We mention that the dual of the grafting product is also called Lie derivative (though it differs from~$\LL_\tau\gamma$) in~\cite{Hairer99bea} (see also~\cite[Sec.\ts IX.9.1]{Hairer06gni}) and is used for computing the modified equation of a B-series method in terms of trees.
The two pre-Lie structures interact according to the following identity, which is the pre-Lie version of the compatibility relation between the laws of composition and substitution of aromatic B-series.
For~$\tau_1$,~$\tau_2\in \Omega_1$ and~$\gamma\in \Omega_n$, we have
\begin{equation}
\label{equation:interaction}
\LL_{\tau_1}(\tau_2 \curvearrowright \gamma)=(\LL_{\tau_1} \tau_2)\curvearrowright \gamma+ \tau_2 \curvearrowright (\LL_{\tau_1} \gamma).
\end{equation}

%\begin{remark}
%Thanks to \cite{Floystad20tup}, it is known that $\Omega_1$ is freely generated from $\{\atree1101\}$ by the operations $\curvearrowright$ and $d_H$. Thus, we deduce that $\Omega_{n,p}$ is freely generated from $\{\atree1101,\atree1111,\atree1121,\dots\}$ by the operations $\curvearrowright$, $d_H$, and concatenation.
%For example, the aromatic Lagrangian $\gamma=\atree2002$ decomposes into
%\[
%\atree2002=d_H(\atree1101 \curvearrowright \atree1101)-\atree1101 \curvearrowright d_H\atree1101
%\]
%This allows us to write an alternative expression of the Lie derivative by replacing in each term one node $\atree1101$ by $\tau$ in all possible ways:
%\[
%\LL_\tau \atree2002=d_H(\tau \curvearrowright \atree1101) + d_H(\atree1101 \curvearrowright \tau) - \tau \curvearrowright d_H\atree1101-\atree1101 \curvearrowright d_H\tau.
%\]
%\end{remark}

\begin{remark}
Thanks to \cite{Floystad20tup}, it is known that $\Omega_1$ is not freely generated by the operations $\curvearrowright$ and $d_H$. For the elements of $\Omega_{n}$ that can be described by the operations $\curvearrowright$, $d_H$, and concatenation, an alternative expression of the Lie derivative is obtained by replacing in each term one node $\atree1101$ by $\tau$ in all possible ways.
For example, the aromatic Lagrangian $\atree2002$ satisfies
\[
\atree2002=d_H(\atree1101 \curvearrowright \atree1101)-\atree1101 \curvearrowright d_H\atree1101,\quad
\LL_\tau \atree2002=d_H(\tau \curvearrowright \atree1101) + d_H(\atree1101 \curvearrowright \tau) - \tau \curvearrowright d_H\atree1101-\atree1101 \curvearrowright d_H\tau.
\]
\end{remark}

The substitution law~$\triangleright$ of aromatic vector fields can be rewritten in terms of the Lie derivative in the spirit of~\cite{Oudom08otl} by using the pre-Lie structure of the Lie derivative~\cite{Saidi10oap,Manchon11lpl,Saidi11adh}.
We first extend~$\LL$ on~$\UU(\Omega_1)$, the universal enveloping algebra (that is, the symmetric tensor algebra) of~$\Omega_1$ over the base field~$\R$ equipped with the symmetric product~$\cdot$ and the shuffle coproduct~$\Delta$. Monomials in~$\UU(\Omega_1)$ are also called clumped forests in~\cite{Bronasco22cef} (see also~\cite{Bogfjellmo19aso}), and in particular we have in~$\UU(\Omega_1)$:
\[
(\atree1001 \atree2101)\cdot(\atree2001 \atree1101)=\frac{1}{2}(\atree1001 \atree2101)\otimes (\atree2001 \atree1101)
+\frac{1}{2}(\atree2001 \atree1101)\otimes (\atree1001 \atree2101)
\neq (\atree1001 \atree2001 \atree2101) \cdot \atree1101.
\]
%Note that~$\UU(\Omega_1)$ strictly contains aromatic forests without covertices via the projection~$\pi$ that forgets the attribution of aromas to the different trees:
%\[\pi((\atree1001 \atree2101)\cdot(\atree2001 \atree1101))=\atree1001 \atree2001 \atree2101 \atree1101\in \FF_2.\]
We extend the Lie derivative for~$\tau_1$, \dots,~$\tau_n$,~$\gamma\in\Omega_1$ by
\[
\LL_{\tau_n \otimes \dots \otimes \tau_1} \gamma=\sum_{\underset{v_i\neq v_j}{v_1,\dots,v_n\in V}} \LL_{\tau_n}^{v_n} \dots \LL_{\tau_1}^{v_1}\gamma,
\]
where~$\LL_{\tau}^v=i_\tau (\gamma_{v\rightarrow\Circled{1}})$ and~$V$ is the set of vertices of~$\gamma$.
We then extend the Lie derivative~$\LL_\tau \gamma$ for~$\tau$,~$\gamma\in \UU(\Omega_1)$ using the Guin-Oudom process from~\cite[Prop.\ts 2.7]{Oudom08otl}: there exists a unique extension of the Lie derivative satisfying
\[
\LL_\ind \gamma_1=\gamma_1,\quad
\LL_{\tau\otimes\gamma_2} \gamma_1=\LL_\tau \LL_{\gamma_2} \gamma_1 - \LL_{\LL_\tau \gamma_2} \gamma_1,\quad
\LL_{\gamma_3} (\gamma_1\cdot \gamma_2)=(\LL_{\gamma_3^{(1)}}\gamma_1)\cdot (\LL_{\gamma_3^{(2)}}\gamma_2),
\]
where~$\tau\in \Omega_1$,~$\gamma_1$,~$\gamma_2$,~$\gamma_3\in\UU(\Omega_1)$ and we use Sweedler's notation.
This allows us to rewrite the substitution in terms of the Lie derivative and the concatenation exponential. Note that equation~\eqref{equation:def_perturbation_Lie_der} is straightforwardly derived from the following result.
\begin{proposition}
\label{proposition:expansion_pre_Lie_substitution}
Let~$\tau$,~$\gamma\in \Omega_1$ and~$\varepsilon>0$, then the perturbation of~$\gamma$ by~$\varepsilon\tau$ is
\[
%\label{equation:expansion_pre_Lie_substitution}
(\atree1101+\varepsilon \tau)\triangleright \gamma
=\LL_{\exp(\varepsilon \tau)} \gamma,\quad
\exp(\varepsilon \tau)=\ind +\varepsilon \tau+\frac{\varepsilon^2}{2}\tau\cdot\tau+\frac{\varepsilon^3}{3!}\tau\cdot\tau\cdot\tau+\dots
\]
\end{proposition}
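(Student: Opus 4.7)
The plan is to expand both sides of the identity as formal power series in $\varepsilon$ and match the coefficient of $\varepsilon^k$ order by order. The key combinatorial ingredient is the identification of $\LL_{\tau^{\cdot k}}\gamma$ with the sum of $k$-fold insertions of $\tau$ at $k$ distinct vertices of $\gamma$. Once this identification is established, the desired equality follows from the standard bijection between ordered and unordered $k$-subsets.

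For the left-hand side, by definition of the substitution law, $(\atree1101+\varepsilon\tau)\triangleright\gamma$ is obtained by replacing each vertex of $\gamma$ by $\atree1101+\varepsilon\tau$, grafting predecessors in all possible ways. Since replacing a vertex by $\atree1101$ leaves it unchanged, expanding the choice at each vertex yields
\[
(\atree1101+\varepsilon\tau)\triangleright\gamma=\sum_{k=0}^{\abs{\gamma}}\varepsilon^k\sum_{\{v_1,\dots,v_k\}\subset V^\bullet(\gamma)}\LL_\tau^{v_1}\cdots\LL_\tau^{v_k}\gamma,
\]
where $\LL_\tau^v\gamma$ is the one-vertex insertion from the proof of Proposition~\ref{proposition:Lie_structure}, and the inner sum runs over unordered $k$-subsets of vertices (the operators $\LL_\tau^v$ commute for distinct $v$). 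For the right-hand side, since symmetrizing equal factors gives $\tau^{\cdot k}=\tau\tau\cdots\tau$ in $\UU(\Omega_1)$, one has $\LL_{\exp(\varepsilon\tau)}\gamma=\sum_{k\geq 0}\frac{\varepsilon^k}{k!}\LL_{\tau^k}\gamma$. The proposition then reduces to the key identity
\[
\LL_{\tau^k}\gamma=\sum_{(v_1,\dots,v_k)\text{ distinct}}\LL_\tau^{v_1}\cdots\LL_\tau^{v_k}\gamma,
\]
now over \emph{ordered} $k$-tuples of distinct vertices, so that the $k!$ reorderings match exactly with the unordered sum above.

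I would prove this identity by induction on $k$, the cases $k=0,1$ being immediate from $\LL_\ind\gamma=\gamma$ and $\LL_\tau\gamma=\sum_v\LL_\tau^v\gamma$. For the inductive step, the first Oudom--Guin rule applied to the concatenation $\tau^k=\tau\cdot\tau^{k-1}$ yields
\[
\LL_{\tau^k}\gamma=\LL_\tau\LL_{\tau^{k-1}}\gamma-\LL_{\LL_\tau\tau^{k-1}}\gamma.
\]
When one applies $\LL_\tau=\sum_w\LL_\tau^w$ to the inductive expression for $\LL_{\tau^{k-1}}\gamma$, the contributions where $w$ lies among the unchanged vertices of $\gamma$ produce precisely the desired ordered $k$-tuple sum, while the contributions where $w$ lies inside one of the $k-1$ previously inserted copies of $\tau$ should be cancelled exactly by the correction term $\LL_{\LL_\tau\tau^{k-1}}\gamma$. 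This last cancellation is the delicate bookkeeping step and the main obstacle: it uses the derivation property $\LL_\tau\tau^{k-1}=(k-1)(\LL_\tau\tau)\cdot\tau^{k-2}$, obtained by iterating the third Oudom--Guin rule with $\tau$ primitive, together with a direct generalization of the combinatorial argument behind Proposition~\ref{proposition:Lie_structure} to track how insertion into one of the $k-1$ inserted copies transfers the operation $\LL_\tau$ onto $\tau^{k-1}$ itself.
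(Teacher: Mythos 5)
The paper states this proposition without proof, deferring implicitly to the Oudom--Guin framework of \cite{Oudom08otl} introduced just above it, so there is no argument of the author's to compare against line by line; your expansion-and-matching strategy is exactly the computation that citation is standing in for, and your reading of both sides (the substitution law as a sum over subsets of vertices replaced by $\tau$, and $\LL_{\exp(\varepsilon\tau)}\gamma=\sum_k\frac{\varepsilon^k}{k!}\LL_{\tau^{\cdot k}}\gamma$) is correct.

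There is, however, a genuine gap in the inductive step, and it is precisely the point you flag as ``the main obstacle.'' Your inductive hypothesis is the identity $\LL_{\tau^{\cdot(k-1)}}\gamma=\sum\LL_\tau^{v_1}\cdots\LL_\tau^{v_{k-1}}\gamma$ for powers of the \emph{single} element $\tau$. But the correction term produced by the first Oudom--Guin rule is $\LL_{\LL_\tau\tau^{\cdot(k-1)}}\gamma=(k-1)\,\LL_{(\LL_\tau\tau)\cdot\tau^{\cdot(k-2)}}\gamma$, whose subscript is a monomial with one factor equal to $\LL_\tau\tau\neq\tau$. Your hypothesis says nothing about such mixed monomials, so you cannot identify this term with the unwanted contributions (those where the new insertion point $w$ lands inside a previously inserted copy of $\tau$), and the cancellation cannot be carried out as written. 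The fix is standard but must be made explicit: strengthen the claim to the multilinear version
\[
\LL_{\sigma_1\cdot\,\cdots\,\cdot\sigma_k}\gamma=\sum_{(v_1,\dots,v_k)\ \text{distinct}}\LL_{\sigma_1}^{v_1}\cdots\LL_{\sigma_k}^{v_k}\gamma,
\qquad \sigma_1,\dots,\sigma_k\in\Omega_1\ \text{arbitrary},
\]
and run the induction on $k$ for that statement. Then $\LL_{\sigma_0\cdot\sigma_1\cdots\sigma_k}\gamma=\LL_{\sigma_0}\LL_{\sigma_1\cdots\sigma_k}\gamma-\sum_{i}\LL_{\sigma_1\cdots(\LL_{\sigma_0}\sigma_i)\cdots\sigma_k}\gamma$, the terms of $\LL_{\sigma_0}=\sum_w\LL_{\sigma_0}^w$ with $w$ inside the inserted copy of $\sigma_i$ reassemble (by the observation $\sum_{w\in V(\sigma_i)}\LL_{\sigma_0}^w\sigma_i=\LL_{\sigma_0}\sigma_i$, the same bookkeeping as in the proof of Proposition~\ref{proposition:Lie_structure}) into exactly $\LL_{\sigma_1}^{v_1}\cdots\LL_{\LL_{\sigma_0}\sigma_i}^{v_i}\cdots\LL_{\sigma_k}^{v_k}\gamma$, which the strengthened inductive hypothesis identifies with the correction terms; specialising to $\sigma_1=\dots=\sigma_k=\tau$ and dividing by $k!$ finishes the proof. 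With that strengthening your argument closes; without it, the key identity is asserted rather than proved.
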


\begin{proof}
Let~$\tau\in \Omega_1$,~$\gamma\in \FF_1$, then the substitution rewrites by definition as
\[
\tau\triangleright \gamma
=\frac{1}{N!} \LL_{\tau^{N}} \gamma
=\frac{1}{N!} \LL_{\tau^{\cdot N}} \gamma,
\]
where~$N$ is the number of vertices of~$\gamma$.
As~$\atree1101$ is the neutral element for substitution, we find
\[
(\atree1101+\varepsilon \tau)\triangleright \gamma
=\frac{1}{N!} \LL_{(\atree1101+\varepsilon \tau)^{\cdot N}} \gamma
=\frac{1}{N!}\sum_{k=0}^N \binom{N}{k} \varepsilon^k \LL_{\atree1101^{\cdot N-k} \cdot \tau^{\cdot k}} \gamma.
=\frac{1}{k!}\sum_{k=0}^N \varepsilon^k \LL_{\tau^{\cdot k}} \gamma.
\]
As~$\LL_{\tau^{\cdot M}} \gamma=0$ if~$M>N$, we obtain the result by linearity.
\end{proof}

\subsection{Symmetries and divergence symmetries}

The Lie derivative allows us to define the concepts of symmetries and divergence symmetries.
%Note that a symmetry is automatically a divergence symmetry.
%In the perturbation approach~\eqref{equation:link_substitution_Lie} of the Lie derivative, a perturbation is a symmetry if the first order perturbation term vanishes.
\begin{definition}
The aromatic vector field~$\tau\in \Omega_1$ is a symmetry for the aromatic form~$\gamma$ if~$\LL_{\tau} \gamma=0$. If there exists an aromatic form~$\eta$ such that~$\LL_{\tau} \gamma=d_H \eta$, we say that~$\tau$ is a divergence symmetry for~$\gamma$.
%see p100-101.
\end{definition}

The simplest symmetries are also called solenoidal~\cite{Laurent23tab}. They are the object of ultimate interest in numerical volume-preservation.
\begin{definition}
An aromatic vector field~$\tau\in \Omega_1$ is solenoidal if it is a symmetry for the aromatic Lagrangian~$\atree1001$.
\end{definition}

Thanks to the exactness of the aromatic bicomplex (see Theorem~\ref{theorem:exactness}), the set of solenoidal forms is~$d_H (\Omega_2)$. In particular, the simplest examples of solenoidal forms are
\begin{align*}
2d_H\wedge \atree1101 \atree2101 &=\atree2002 \atree1101+\atree2001 \atree1101-\atree1001 \atree2101-\atree3101,\\
2d_H\wedge \atree1101 \atree3102 &=\atree3001 \atree1101+\atree3002 \atree1101+\atree3003 \atree1101-\atree4103-\atree4104-\atree1001 \atree3102,\\
2d_H\wedge \atree1101 \atree3101 &=\atree3004 \atree1101+2\atree3002 \atree1101+\atree4103-2\atree4104-\atree4101-\atree1001 \atree3101,\\
2d_H\wedge \atree1001 \atree1101 \atree2101 &=\atree3001 \atree1101+\atree1001 \atree2002 \atree1101+\atree1001 \atree2001 \atree1101-\atree2001 \atree2101-\atree1001 \atree1001 \atree2101-\atree1001 \atree3101.
\end{align*}
As the complexity of the calculation increases rapidly with the order, there is no known example of symmetry that is not solenoidal at the present time.

Let us state the aromatic formulation of the first variational formula, a central tool in the proof of Noether's theorem.
\begin{proposition}[First variational formula]
\label{lemma:link_dV_deltaV}
Let~$\gamma\in \Omega_0$ (respectively~$\gamma\in \II_p$), then there exists an aromatic form~$\eta\in \Omega_{1,1}$ (respectively~$\eta\in \Omega_{1,p+1}$) such that
\begin{equation}
\label{equation:first_variation_formula}
d_V \gamma=\delta_V \gamma+d_H \eta.
\end{equation}
In particular, let~$\tau\in \Omega_{1}$ and~$\gamma\in \Omega_0$, then there exists~$\eta\in\Omega_{1}$ such that
\begin{equation}
\label{equation:first_variation_formula_Lie}
\LL_{\tau} \gamma=i_\tau \delta_V \gamma+d_H\eta.
\end{equation}
\end{proposition}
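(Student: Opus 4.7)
The plan is to read the first variational formula as a direct consequence of the augmented horizontal homotopy identity \eqref{equation:augmented_horizontal_homotopy}, and then to obtain the Lie-derivative version as a corollary through the Cartan formula \eqref{equation:vertical_Cartan}.

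For the first statement, I would start from the observation that if $\gamma\in\Omega_0$ then $d_V\gamma$ lives in $\Omega_{0,1}$, and if $\gamma\in\II_p$ then $d_V\gamma$ lives in $\Omega_{0,p+1}$; in either case we land in a space $\Omega_{0,q}$ with $q\geq 1$ where \eqref{equation:augmented_horizontal_homotopy} applies. Applying that identity to $d_V\gamma$ gives
\[
d_V\gamma = I(d_V\gamma) + d_H\,\mathfrak{h}_H(d_V\gamma) = \delta_V\gamma + d_H\eta,
\]
with $\eta:=\mathfrak{h}_H d_V\gamma$, which by the declared types of the homotopy operator belongs to $\Omega_{1,1}$ in the Lagrangian case and to $\Omega_{1,p+1}$ in the source-form case. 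The identification $I(d_V\gamma)=\delta_V\gamma$ is just the definition of $\delta_V$; for $\gamma\in\II_p$ one notes that $\delta_V\gamma\in\II_{p+1}$ as expected, and on $\II_p$ one additionally has $I\gamma=\gamma$, though this is not actually needed for the formula.

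For the second statement, I would use the vertical Cartan formula \eqref{equation:vertical_Cartan} and observe that $i_\tau\gamma=0$ when $\gamma\in\Omega_0$ (since there are no covertices to contract), so $\LL_\tau\gamma=i_\tau d_V\gamma$. Plugging in the decomposition just obtained and using that $i_\tau$ commutes with $d_H$ (both operate on disjoint structural data — $d_H$ grafts roots while $i_\tau$ substitutes covertices), we get
\[
\LL_\tau\gamma = i_\tau\delta_V\gamma + i_\tau d_H\mathfrak{h}_H d_V\gamma = i_\tau\delta_V\gamma + d_H\bigl(i_\tau\mathfrak{h}_H d_V\gamma\bigr),
\]
which is \eqref{equation:first_variation_formula_Lie} with $\eta:=i_\tau\mathfrak{h}_H d_V\gamma\in\Omega_{1,0}=\Omega_1$. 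Equivalently, one can invoke directly the third identity in Proposition~\ref{proposition:properties_Lie}, which already asserts $\LL_\tau\gamma=(d_H\mathfrak{\Liehomotopy}_\tau+i_\tau\delta_V)\gamma$ on $\Omega_0$, so that $\eta=\mathfrak{\Liehomotopy}_\tau\gamma$ does the job.

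There is no genuine obstacle: once the augmented horizontal homotopy has been set up in Section~\ref{section:preliminaries}, the only nontrivial thing to verify is the commutation of $i_\tau$ with $d_H$ and the correct degrees of the output $\eta$, both of which are immediate from the combinatorial definitions. The content of this proposition is really that the exactness of the augmented bicomplex, expressed through $\mathfrak{h}_H$, is the algebraic analogue of integration by parts in the classical variational calculus.
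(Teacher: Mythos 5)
Your proposal is correct and follows essentially the same route as the paper: the paper also writes $d_V\gamma = I(d_V\gamma) + (d_V\gamma - I(d_V\gamma))$ and invokes the horizontal exactness of the augmented bicomplex (equivalently, the homotopy identity \eqref{equation:augmented_horizontal_homotopy}, which is how you phrase it) to produce $\eta$, then applies $i_\tau$ and uses that $i_\tau$ commutes with $d_H$. Your version is slightly more explicit in naming $\eta=\mathfrak{h}_H d_V\gamma$ and in spelling out that $\LL_\tau\gamma=i_\tau d_V\gamma$ on $\Omega_0$, but the substance is identical.
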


\begin{proof}
%For~$\gamma\in \II_p$, the augmented horizontal homotopy identity~\eqref{equation:augmented_horizontal_homotopy} applied to~$d_V\gamma\in \Omega_{0,p+1}$ yields
%$$d_V \gamma=I d_V \gamma +d_H \mathfrak{h}_H d_V \gamma.$$
Let~$\gamma\in \Omega_0$ or~$\gamma\in \II_p$ and let~$\omega=d_V\gamma$. As~$\omega-I\omega\in \Ker(I)$, by horizontal exactness of the aromatic bicomplex (Theorem~\ref{theorem:exactness}), there exists~$\eta$ such that
$$d_V\gamma=\omega=I\omega +(\omega-I\omega)=I\omega+d_H\eta.$$
Applying the contraction~$i_\tau$ to~\eqref{equation:first_variation_formula} yields~\eqref{equation:first_variation_formula_Lie} as~$i_\tau$ and~$d_H$ commute.
\end{proof}

A first application of the first variational formula, in particular of equation~\eqref{equation:first_variation_formula_Lie}, is that any~$\tau\in \Omega_1$ is a divergence symmetry for~$\gamma\in \Ker(\delta_V|_{\Omega_0})=d_H(\Omega_1)$. In particular, solenoidal forms (and more generally symmetries) are divergence symmetries.
%Up till now, there are no other known divergence symmetries.
The elements with up to three nodes in~$d_H(\Omega_1)$ are spanned by
%\[
\begin{align*}
\{
&\atree1001,
\atree2001+\atree2002,
\atree2001+\atree1001\atree1001,
\atree3001+\atree3002+\atree3003,
\atree3004+2\atree3002,\\
&\atree3001+\atree2001 \atree1001+\atree2002 \atree1001,
\atree3004-\atree2002 \atree1001,
\atree1001 \atree1001 \atree1001+2\atree2001 \atree1001\}
\end{align*}
%\]

An alternative way to produce symmetries is given by the followingresult, a corollary of Propositions~\ref{proposition:Lie_structure} and~\ref{proposition:product_rule_Lie}.
\begin{proposition}
If~$\tau_1\in \Omega_1$ is a divergence symmetry for~$\gamma\in \Omega_0$, then for any~$\tau_2\in \Omega_1$,~$\llbracket\tau_1,\tau_2\rrbracket$ is a divergence symmetry for~$\gamma$.
If~$\tau_1$ is a symmetry for~$\gamma$, then~$\tau_1$ is a symmetry for~$\LL_{\tau_2} \gamma$ if and only if~$\llbracket\tau_1,\tau_2\rrbracket$ is a symmetry for~$\gamma$.
\end{proposition}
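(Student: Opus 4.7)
The plan is to combine the Lie algebra representation $[\LL_{\tau_1},\LL_{\tau_2}]=\LL_{\llbracket\tau_1,\tau_2\rrbracket}$ from Proposition~\ref{proposition:Lie_structure} with the commutation $[\LL_\tau,d_H]=0$ from Proposition~\ref{proposition:product_rule_Lie}. Both assertions spring from the identity
\[
\LL_{\llbracket\tau_1,\tau_2\rrbracket}\gamma = \LL_{\tau_1}(\LL_{\tau_2}\gamma) - \LL_{\tau_2}(\LL_{\tau_1}\gamma),
\]
obtained by applying the first of these propositions to $\gamma$.

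The symmetry statement is the straightforward one: substituting the hypothesis $\LL_{\tau_1}\gamma=0$ annihilates the second summand, leaving $\LL_{\llbracket\tau_1,\tau_2\rrbracket}\gamma=\LL_{\tau_1}(\LL_{\tau_2}\gamma)$, which vanishes if and only if $\tau_1$ is a symmetry of $\LL_{\tau_2}\gamma\in\Omega_0$.

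For the divergence symmetry statement, I would write $\LL_{\tau_1}\gamma=d_H\eta$ and use $[\LL_{\tau_2},d_H]=0$ to rewrite the second summand as $d_H\LL_{\tau_2}\eta\in\Img(d_H)$, so that the claim reduces to $\LL_{\tau_1}(\LL_{\tau_2}\gamma)\in\Img(d_H)$. Since $\LL_{\tau_2}\gamma\in\Omega_0$, the first variational formula~\eqref{equation:first_variation_formula_Lie} identifies this term modulo $\Img(d_H)$ with $i_{\tau_1}\delta_V(\LL_{\tau_2}\gamma)$. I would then exploit that $\delta_V$ and $\LL_{\tau_2}$ nearly commute, a consequence of the augmented horizontal homotopy $\mathrm{id}=I+d_H\mathfrak{h}_H$ on $\Omega_{0,1}$ together with $Id_H=0$, to push $\LL_{\tau_2}$ past $\delta_V$, and then use a Cartan-style interaction between $i_{\tau_1}$ and $\LL_{\tau_2}$ to reduce the remaining term to the reformulation $i_{\tau_1}\delta_V\gamma\in\Img(d_H)$ of the hypothesis.

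The main obstacle is bookkeeping the defect terms. Both the commutation of $I$ with $\LL_{\tau_2}$ and the Cartan-style commutation between contractions and Lie derivatives hold only up to $d_H$-exact corrections in the aromatic setting, and the argument has to collect these corrections carefully and verify that each one lies in $\Img(d_H)$, using that $\LL_{\tau_2}$ stabilises $\Img(d_H)$ by Proposition~\ref{proposition:product_rule_Lie}. Establishing the precise form of the bracket $[i_{\tau_1},\LL_{\tau_2}]$ on aromatic forms is not carried out in the excerpt and would need a separate combinatorial verification against the grafting/replacement definitions of $i_{\tau_1}$ and $\LL_{\tau_2}$.
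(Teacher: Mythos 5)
The paper itself does not spell out a proof: it asserts the proposition as a ``consequence of Propositions \ref{proposition:Lie_structure} and \ref{proposition:product_rule_Lie}'', and your skeleton --- expand $\LL_{\llbracket\tau_1,\tau_2\rrbracket}\gamma=\LL_{\tau_1}\LL_{\tau_2}\gamma-\LL_{\tau_2}\LL_{\tau_1}\gamma$ and use $[\LL_\tau,d_H]=0$ --- is certainly the intended one. Your treatment of the second claim is correct and complete: with $\LL_{\tau_1}\gamma=0$ the commutator identity collapses to $\LL_{\llbracket\tau_1,\tau_2\rrbracket}\gamma=\LL_{\tau_1}(\LL_{\tau_2}\gamma)$, and the equivalence is immediate.

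For the first claim, however, your argument does not close. The reduction to showing $\LL_{\tau_1}(\LL_{\tau_2}\gamma)\in\Img(d_H)$ is right, and the steps replacing $\LL_{\tau_1}(\LL_{\tau_2}\gamma)$ by $i_{\tau_1}\LL_{\tau_2}\delta_V\gamma$ modulo $\Img(d_H)$ (via \eqref{equation:first_variation_formula_Lie}, $I=\mathrm{id}-d_H\mathfrak{h}_H$ and $[\LL_{\tau_2},d_H]=0$) are harmless. The problem is the last step. The Cartan-type bracket you invoke has the form $[\LL_{\tau_2},i_{\tau_1}]=i_{\LL_{\tau_2}\tau_1}$, so after moving $\LL_{\tau_2}$ past $i_{\tau_1}$ and absorbing $\LL_{\tau_2}(i_{\tau_1}\delta_V\gamma)\in\Img(d_H)$, you are left with the defect $i_{\LL_{\tau_2}\tau_1}\delta_V\gamma$. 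By the first variational formula this term is congruent to $\LL_{\LL_{\tau_2}\tau_1}\gamma$ modulo $\Img(d_H)$, i.e.\ it is precisely the assertion that the vector field $\LL_{\tau_2}\tau_1$ is a divergence symmetry of $\gamma$ --- a statement of exactly the same nature as the one being proved, and not a consequence of the hypothesis that $\tau_1$ is one. So this ``correction'' is not a bookkeeping term that lands in $\Img(d_H)$ for free; it is the entire content of the claim, and the proposed route is circular at that point. Note that if $\tau_2$ were also assumed to be a divergence symmetry, the argument would close at once, since then $\LL_{\tau_1}\LL_{\tau_2}\gamma=\LL_{\tau_1}(d_H\eta_2)=d_H(\LL_{\tau_1}\eta_2)$; the strength of the proposition lies in allowing an arbitrary $\tau_2$, and that is exactly the part your sketch leaves unproved. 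A correct argument must supply an independent reason why $\LL_{\tau_1}\LL_{\tau_2}\gamma$ (equivalently $\delta_V\LL_{\tau_1}\LL_{\tau_2}\gamma=0$, equivalently that $\tau_1$ is a divergence symmetry of $\LL_{\tau_2}\gamma$) holds for every $\tau_2$.
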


\begin{remark}
As the number of aromatic trees grows fast with the number of nodes, one is often interested in finding symmetries for specific differential systems. Let~$f\colon \R^d\rightarrow\R^d$ be a smooth vector field, and~$F$ be the elementary differential map (see~\cite{Laurent23tab}). We call~$f$-symmetry for~$\gamma$ an aromatic vector field~$\tau\in \Omega_1$ such that~$F(\LL_\tau \gamma)(f)=0$, and~$\tau$ is~$f$-solenoidal in the specific case where~$\gamma=\atree1001$.
In particular, if we assume that~$\Div(f)=0$, then~$\tau=\atree2002 \atree1101-\atree3101$ is a~$f$-symmetry for~$\atree1001$.
The assumption~$\Div(f)=0$ does not create new~$f$-solenoidal vector fields~\cite{Laurent23tab}, but it does create new~$f$-symmetries.
In particular, a~$f$-solenoidal aromatic vector field with~$\Div(f)=0$ is a~$f$-symmetry of any aroma~$\gamma\in \Omega_0$ with a 1-loop.
For example,~$\tau=\atree2002 \atree1101-\atree3101$ is a~$f$-symmetry for~$\atree1001$,~$\atree2001$,~$\atree3004$,...
The use of specific vector fields~$f$ gives rise to degeneracies, which makes it easier to find~$f$-symmetries. We cite in particular the work~\cite[Sec.\ts 4]{Bogfjellmo22uat} that uses such degeneracies with~$f$ quadratic.
%
%
%\begin{ex*}
%The set of trees equipped with the grafting product is a free pre-Lie algebra~\cite{Chapoton01pla}. A straightforward consequence is that any aroma with a 1-loop (i.e., the connected multi-aromas whose root is connected to itself) is freely generated by iterated grafting. For instance, we find
%$$
%\atree3001=(\atree1101 \curvearrowright \atree1101) \curvearrowright \atree1001, \quad \atree3004=\atree1101 \curvearrowright (\atree1101 \curvearrowright \atree1001)-(\atree1101 \curvearrowright \atree1101) \curvearrowright \atree1001.$$
%Let a divergence-free vector field~$f$, then Proposition~\ref{proposition:higher_symmetries} and an induction yield that a~$f$-solenoidal aromatic vector field is a~$f$-symmetry of any aroma~$\gamma\in \Omega_0$ with a 1-loop.
%For example,~$\tau=\atree2002 \atree1101-\atree3101$ is a~$f$-symmetry for~$\atree1001$,~$\atree2001$,~$\atree3001$,~$\atree3004$,...
%\end{ex*}
%
%
%In addition, Proposition~\ref{proposition:product_rule_Lie} yields that the ($f$-)symmetries are stable by concatenation, that is, if~$\tau$ is a ($f$-)symmetry for~$\gamma_1$,~$\gamma_2\in\Omega_0$, then~$\tau$ is a ($f$-)symmetry for~$\gamma_1\gamma_2$.
\end{remark}

\subsection{The aromatic Noether theorem}

The Noether theorem, published in the paper Invariante Variationsprobleme by Emmy Noether in 1918 (see the english translation~\cite{Noether71ivp}), draws an explicit link between conservation laws and symmetries in the context of variational calculus.
\begin{definition}
The aromatic vector field~$\tau\in \Omega_{1}$ is a generator of a conservation law~$\eta\in\Omega_{1}$ for the source form~$\gamma\in \II_1$ if~$i_{\tau}\gamma=d_H\eta$.
\end{definition}

We rewrite Noether's theorem in the context of aromatic forms.
\begin{theorem}[Noether's theorem]
\label{theorem_global_Noether}
Consider an aromatic Lagrangian~$\gamma\in \Omega_0$ and an aromatic vector field~$\tau\in \Omega_{1}$. Then,~$\tau$ is a divergence symmetry of~$\gamma$ if and only if~$\tau$ is the generator of a conservation law for~$\delta_V\gamma$.
\end{theorem}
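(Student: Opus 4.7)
The plan is to derive the equivalence almost immediately from the first variational formula (Proposition \ref{lemma:link_dV_deltaV}), specifically equation \eqref{equation:first_variation_formula_Lie}, which for $\gamma \in \Omega_0$ and $\tau \in \Omega_1$ provides some $\eta \in \Omega_1$ with
\[
\LL_\tau \gamma = i_\tau \delta_V \gamma + d_H \eta.
\]
This identity shows that $\LL_\tau \gamma$ and $i_\tau \delta_V \gamma$ differ by an element of $\Img(d_H)$, so one lies in $\Img(d_H)$ if and only if the other does. The whole theorem reduces to unpacking this observation in both directions.

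For the forward implication, I would assume $\tau$ is a divergence symmetry, i.e.\ $\LL_\tau \gamma = d_H \eta'$ for some $\eta' \in \Omega_1$. Substituting into the first variational formula gives
\[
i_\tau \delta_V \gamma = d_H(\eta' - \eta),
\]
so $\tau$ generates the conservation law $\eta' - \eta \in \Omega_1$ for the source form $\delta_V\gamma \in \II_1$.

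For the reverse implication, I would assume $\tau$ generates a conservation law for $\delta_V\gamma$, i.e.\ $i_\tau \delta_V\gamma = d_H \zeta$ for some $\zeta \in \Omega_1$. Plugging back into \eqref{equation:first_variation_formula_Lie} yields
\[
\LL_\tau \gamma = d_H(\zeta + \eta),
\]
showing that $\tau$ is a divergence symmetry of $\gamma$ with explicit witness $\zeta + \eta$.

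There is essentially no obstacle: the heavy lifting has already been done in establishing the first variational formula, which itself rests on horizontal exactness of the augmented aromatic bicomplex (Theorem \ref{theorem:exactness}) through the identity $d_V \gamma = \delta_V \gamma + d_H\eta$. The only thing I would be mildly careful about is to remark that the contraction $i_\tau$ commutes with $d_H$ (used implicitly when passing from $d_V\gamma - \delta_V\gamma \in \Img(d_H)$ to $i_\tau d_V\gamma - i_\tau \delta_V\gamma \in \Img(d_H)$), which is why \eqref{equation:first_variation_formula_Lie} is indeed an identity in $\Omega_1$ as required for the conservation-law statement.
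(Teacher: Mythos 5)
Your argument is exactly the paper's: the paper proves this theorem by simply observing that it is a direct consequence of the first variational formula \eqref{equation:first_variation_formula_Lie}, and your proposal unpacks that observation correctly in both directions. The approach and the key identity are the same; no issues.
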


Theorem~\ref{theorem_global_Noether} is a direct consequence of the first variational formula~\eqref{equation:first_variation_formula_Lie}. In the context of variational calculus, a symmetry of the Lagrangian exactly corresponds to the preservation of a quantity, which typically leads to superfluous degrees of freedom (see, for instance, the textbooks~\cite{Anderson89tvb,Arnold89mmo,Arnold06mao}).

\begin{ex*}
Let~$\gamma\in d_H(\Omega_1)$, then as~$d_H$ and~$\LL_\tau$ commute, any aromatic vector field~$\tau$ is a divergence symmetry for~$\gamma$. The associated conservation law is~$\eta=0$.
%As the calculation quickly become tedious, it is not known whether there exists non-trivial conservation laws in the aromatic context.
\end{ex*}

\begin{remark}
\label{remark:local_Noether}
An alternate formulation of Theorem~\ref{theorem_global_Noether} with source forms is the following. Define the natural Lie derivative on~$\II_p$ by~$\LL^\natural_\tau=I\LL_\tau$.
Assume that~$\gamma\in \II_1$ satisfies~$\delta_V \gamma=0$ (i.e.\ts,~$\gamma$ is variational), then the aromatic vector field~$\tau\in \Omega_1$ is a symmetry for~$\gamma$ if and only if~$\delta_V i_\tau \gamma=0$.
%\modg{MEEEEH WHERE DO I USE THE NATURAL LIE DER?}
%satisfies~$\LL^\natural_\tau \gamma=0$ ($\tau$ is a local symmetry for~$\gamma$) if and only if~$\delta_V i_\tau \gamma=0$ ($\tau$ is a generator of a local conservation law for~$\gamma$).
This formulation of the Noether theorem is a consequence of the first variational formula~\eqref{equation:first_variation_formula_Lie} and the identities
$$\LL^\natural_\tau \gamma = (\delta_V i_\tau +I i_\tau \delta_V)\gamma, \quad \delta_V \LL_\tau \gamma=\LL^\natural_\tau d_V \gamma.$$
\end{remark}

%The statement of the Noether theorems in the context of variational calculus uses the natural Lie derivative.
%\begin{lemma}
%\label{proposition:lemma_Noether}
%The natural Lie derivative is defined on~$\II_p$ by~$\LL^\natural_\tau=I\LL_\tau$ for~$\tau\in \Omega_{1}$.
%Then,~$\LL^\natural$ satisfies
%$$I \LL_\tau \gamma = (\delta_V i_\tau +I i_\tau \delta_V)\gamma.$$
%\end{lemma}
%
%\begin{proof}
%We find
%\[
%I \LL_\tau \gamma = (\delta_V i_\tau +I i_\tau d_V)\gamma
%=(\delta_V i_\tau +I i_\tau \delta_V +I i_\tau d_H)\gamma
%=(\delta_V i_\tau +I i_\tau \delta_V)\gamma,
%\]
%where we used in order~\eqref{equation:vertical_Cartan}, the first variational formula~\eqref{equation:first_variation_formula}, and the fact that~$i_\tau$ and~$d_H$ commute.
%%As~$I$ and~$\LL_\tau$ commute (Proposition~\ref{proposition:properties_Lie}) and as~$I\gamma=\gamma$, we obtain the desired identity.
%\end{proof}
%
%% (p106)
%\begin{theorem}[Local Noether's theorem]
%\label{theorem_local_Noether}
%Assume that~$\gamma\in \II_1$ satisfies~$\delta_V \gamma=0$.
%%(i.e.,~$\Gamma$ is a \emph{variational} source form), 
%Then,~$\tau\in \Omega_1$ is a local symmetry for~$\gamma$ if and only if~$\tau$ is a generator of a local conservation law for~$\gamma$.
%\end{theorem}

\subsection{Application to numerical volume-preservation}
%Volume-preservation and symmetries}
\label{section:application_VP}

The search for a volume-preserving integrator in the form of an aromatic B-series method, or equivalently of an affine-equivariant method~\cite{McLachlan16bsm,MuntheKaas16abs,Laurent23tue}, is an important open question of geometric numerical integration. It is known that there is no volume-preserving B-series method except the exact flow~\cite{Chartier07pfi,Iserles07bsm}, but the question for aromatic B-series methods is still open. We refer to the recent works~\cite{Bogfjellmo19aso,Bogfjellmo22uat,Celledoni22dad,Laurent23tab} for different approaches to the conjecture.
In this subsection, we apply the results on the Lie derivative, the symmetries, and the Noether theorem in the context of volume-preservation, and we rewrite the numerical open questions in a purely algebraic manner.

We denote~$\overline{\Omega}_1$ the set of formal series of elements of~$\Omega_1$ graded by the number of nodes of the aromatic trees, also called aromatic B-series. The previous results presented in this paper extend straightforwardly to aromatic B-series.
Thanks to~\cite[Thm.\ts 4.17]{Laurent23tab}, the aromatic B-series of a consistent volume-preserving integrator essentially\footnote{In the numerical context, the vector field of the ordinary differential equation of interest is assumed to be divergence-free, so that the aromas with a node linked to itself, such as~$\atree1001$, do not appear in the formal series. It was shown in~\cite{Laurent23tab} that adding this degeneracy condition does not create new solenoidal forms, so that we do not lose any generality by working on~$\overline{\Omega}_1$ here.} takes the form of the following substitution~$(\atree1101+\tau)\triangleright e$, where~$\tau\in\overline{\Omega}_1$ is a formal series of solenoidal forms and~$e\in\overline{\Omega}_1$ is the standard B-series of the exact flow of~$y'=f(y)$ (see~\cite[Chap.\ts III]{Hairer06gni}).
According to Proposition~\ref{proposition:expansion_pre_Lie_substitution}, the problem of volume-preservation then boils down to finding an aromatic B-series method (or class of methods) that has the form \eqref{eqaution:expansion_substitution}.
Note that choosing~$\tau=0$ in equation~\eqref{eqaution:expansion_substitution} yields the exact flow~$e$, which is the only volume-preserving B-series method~\cite{Chartier07pfi,Iserles07bsm}.
%\[
%(\atree1101+\tau)\triangleright e=\LL_{\exp(\tau)} e.
%\]
%In particular, the first terms of the expansion are
\begin{align}
\label{eqaution:expansion_substitution}
(\atree1101+\tau)\triangleright e&=\LL_{\exp(\tau)} e \\
&=e
+\LL_\tau e
+\frac{1}{2}(\LL_\tau^2-\LL_{\LL_\tau\tau})e \nonumber\\
&+\frac{1}{6}(\LL_\tau^3-2\LL_\tau \LL_{\LL_\tau\tau}-\LL_{\LL_\tau\tau}\LL_\tau +\LL_{\LL_{\LL_\tau\tau}\tau} +\LL_{\LL_\tau^2\tau} ) e
+ \dots \nonumber
\end{align}

\begin{remark}
One is also interested in finding numerical methods that preserve modified measures. For quadratic ODEs, the Kahan-Hirota-Kimura discretization~\cite{Kahan93unm,Hirota00dote,Hirota00dotl} can preserve some modified measures~\cite{Celledoni22dad, Bogfjellmo22uat}.
In this context, one searches for aromatic Lagrangians~$\mu\in\Omega_0$ and modified vector field represented by~$\tau\in \Omega_1$ such that~$(1+\mu)\tau$ is solenoidal.
\end{remark}

Let us adapt Noether's theory in the context of volume-preservation.
While Theorem~\ref{theorem_global_Noether} and Remark~\ref{remark:local_Noether} focus on Lagrangians in~$\Omega_0$ and source forms in~$\II_1$ in variational calculus, we are also interested in symmetries and conservation laws on~$\Omega_1$ and~$\Omega_2$.
The following result, derived from Theorem~\ref{theorem:exactness}, gives necessary conditions on the form of the modified vector field of a volume-preserving method. We recall that as the aromatic bicomplex is exact, any solenoidal form~$\gamma\in \Omega_1$ is the image of a form in~$\hat{\gamma}\in \Omega_2$, that is,~$\gamma=d_H \hat{\gamma}$.
\begin{theorem}
\label{theorem_global_Noether_vp}
Let~$\gamma=d_H \hat{\gamma}\in \Omega_1$ be solenoidal, then every aromatic vector field~$\tau\in \Omega_1$ is a divergence symmetry for~$\gamma$.
Moreover,~$\tau$ is a divergence symmetry of~$\hat{\gamma}$ (and thus a symmetry of~$\gamma$) if and only if there exists an aromatic form~$\eta\in \Omega_3$ such that
$
H_\tau \gamma=d_H \eta.
$
\end{theorem}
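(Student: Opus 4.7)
The plan is to derive both claims from Proposition \ref{proposition:properties_Lie} (the horizontal Cartan-type identity $\LL_\tau = d_H H_\tau + H_\tau d_H$ on $\Omega_n$ for $n>0$) combined with the fact from Proposition \ref{proposition:product_rule_Lie} that $\LL_\tau$ commutes with $d_H$.

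First I would handle the claim that every $\tau\in\Omega_1$ is a divergence symmetry of $\gamma$. Since $\gamma=d_H\hat\gamma$, commutation of $\LL_\tau$ with $d_H$ gives $\LL_\tau\gamma=d_H(\LL_\tau\hat\gamma)$, which already exhibits $\LL_\tau\gamma$ as the horizontal derivative of an element of $\Omega_2$. (Alternatively, one may apply the Cartan identity to $\gamma\in\Omega_1$ and note that $H_\tau d_H\gamma=0$ because $d_H\gamma=0$, so that $\LL_\tau\gamma=d_H H_\tau\gamma$; this also records that $H_\tau\gamma\in\Omega_2$, which is needed later.)

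For the equivalence, the key step is to apply the horizontal Cartan formula of Proposition \ref{proposition:properties_Lie} to $\hat\gamma\in\Omega_2$, yielding
\[
\LL_\tau\hat\gamma=d_H H_\tau\hat\gamma+H_\tau d_H\hat\gamma=d_H H_\tau\hat\gamma+H_\tau\gamma,
\]
where $H_\tau\hat\gamma\in\Omega_3$. Thus the statement ``$\tau$ is a divergence symmetry of $\hat\gamma$'', i.e.\ $\LL_\tau\hat\gamma=d_H\tilde\eta$ for some $\tilde\eta\in\Omega_3$, is equivalent, after subtracting $d_H H_\tau\hat\gamma$, to $H_\tau\gamma=d_H(\tilde\eta-H_\tau\hat\gamma)$. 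Both implications then follow by setting $\eta=\tilde\eta-H_\tau\hat\gamma\in\Omega_3$ in one direction and $\tilde\eta=\eta+H_\tau\hat\gamma\in\Omega_3$ in the other.

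Finally, for the parenthetical ``thus a symmetry of $\gamma$'', if $\LL_\tau\hat\gamma=d_H\tilde\eta$ then commutation with $d_H$ plus $d_H^2=0$ (Proposition \ref{proposition:derivatives_squared}) give $\LL_\tau\gamma=\LL_\tau d_H\hat\gamma=d_H\LL_\tau\hat\gamma=d_H^2\tilde\eta=0$. The only real bookkeeping obstacle is tracking the bigrading: $h_H$ raises the root index by one, so $H_\tau$ maps $\Omega_n\to\Omega_{n+1}$, and verifying that $H_\tau\hat\gamma$ lives in $\Omega_3$ (so that $\eta\in\Omega_3$ as claimed) is the one thing that must be checked carefully; everything else is a direct manipulation using the identities already established.
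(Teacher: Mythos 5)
Your proposal is correct and follows the route the paper intends (the paper omits an explicit proof, saying only that the result is ``derived from Theorem \ref{theorem:exactness}''): the first claim is the commutation $\LL_\tau d_H\hat{\gamma}=d_H\LL_\tau\hat{\gamma}$ already noted in the example after Theorem \ref{theorem_global_Noether}, and the equivalence is exactly the Lie homotopy identity of Proposition \ref{proposition:properties_Lie} applied to $\hat{\gamma}\in\Omega_2$, with the grading check $H_\tau\hat{\gamma}\in\Omega_3$ handled correctly.
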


%We recover the exactness property of solenoidal forms~$\gamma=d_H h_H \gamma$ by choosing~$\tau=\atree1101$ (see Theorem~\ref{theorem:exactness}).
%The second part of the theorem plays the role of the global Noether theorem. It is unclear at this point whether there exists non-trivial symmetries in this context, as the calculations quickly become tedious when increasing the number of nodes, and we recall that the smallest non-trivial elements of~$\Omega_3$ have 6 nodes.
%Also, generalising this result with the relaxed assumption~$F(d_H\gamma)(f)=0$ is difficult matter for future works.

%
%The other pre-Lie structure allows us to find perturbed preserved measure.
%\begin{proposition}
%Let~$\gamma\in \Omega_1$ be a solenoidal form and~$\mu=d_H\tau\in \Omega_0$, then~$(1+\mu)\gamma$ is a solenoidal form if and only if~$\gamma\curvearrowright \mu=0$ (by expanding~$d_H$).
%\end{proposition}
%
%The other pre-Lie structure (the horizontal one) is~$\LL^\curvearrowright_\tau \gamma$. Can I find a Cartan formula/define it everywhere on the bicomplex?

%\begin{ex*}
%Let~$\mu\in \Omega_0$, then we find
%$$\LL_{(1-\mu)\atree1101}(\wedge \atree1101 \atree2101)=0, \quad H_{(1-\mu)\atree1101} (\wedge \atree1101 \atree2101)=0.$$
%\end{ex*}

%
%As the calculations quickly get tedious with the number of nodes, there is no known example of symmetry that is not solenoidal at the present time. Finding such examples and characterise them would help understand the form of volume-preserving methods.

There is no known non-trivial modified vector field of a volume-preserving method that has symmetries to the best of our knowledge.
A better understanding of the Lie derivative, the symmetries, and especially the solenoidal vector fields could give insight on the form of a volume-preserving method and help describe the degrees of freedom we have in the choice of the method.
%
%The understanding of volume-preserving methods would benefit greatly from further descriptions of symmetries.
There exists a vast literature on the description of symmetries in variational calculus, which further motivates collaborations on the application of the tools of variational calculus for the creation of volume-preserving integrators. In addition, extending the aromatic bicomplex and the aromatic Noether's theory for the study of volume-preservation on manifolds or for the exact numerical preservation of the invariant measure of ergodic stochastic systems is matter for future work.

%--------------------------------------------------------------
\bigskip

\noindent \textbf{Acknowledgements.}\
The work of the author was supported by the Research Council of Norway through project 302831 ``Computational Dynamics and Stochastics on Manifolds'' (CODYSMA).
The author would like to thank E.\ts Bronasco, D.\ts Manchon, H.\ts Munthe-Kaas, and the participants of the MAGIC workshops for the enlightening discussions we shared.

\bibliographystyle{abbrv}
%\nocite{*}
\bibliography{Ma_Bibliographie}

\end{document}